\numberwithin{equation}{section}
\newtheorem{theorem}{Theorem}[section]
\newtheorem{coro}[theorem]{Corollary}
\newtheorem{prop}[theorem]{Proposition}
\newtheorem{lemma}[theorem]{Lemma}
\renewcommand{\leq}{\leqslant}
\renewcommand{\geq}{\geqslant}
\newcommand{\xir}[1]{\xi^{(#1)}}
\newcommand{\Mod}[1]{\ (\mathrm{mod}\ #1)}
\newcommand{\rk}{\mathrm{rk}}
\renewcommand{\o}{\mathrm{o}}
\newcommand{\rs}{\mathrm{rs}}
\newcommand{\so}{\mathrm{so}}
\renewcommand{\sp}{\mathrm{sp}}
\newcommand{\abs}[1]{\lvert#1\rvert}
\newcommand{\la}{\lambda}
\newcommand{\La}{\Lambda}
\DeclareMathOperator{\sgn}{sgn}
\newcommand{\mur}[1]{\mu^{(#1)}}
\newcommand{\nur}[1]{\nu^{(#1)}}
\newcommand{\lar}[1]{\lambda^{(#1)}}
\newcommand{\tcore}{t\textup{-core}}
\newcommand{\xcore}[1]{#1\text{-core}}
\begin{document}

\title[Universal characters]{Universal characters twisted by roots of unity}
\date{}
\author{Seamus P.~Albion}
\address{Fakult\"{a}t f\"{u}r Mathematik, Universit\"{a}t Wien, 
Oskar-Morgenstern-Platz 1, A-1090 Vienna, Austria}
\email{seamus.albion@univie.ac.at}

\subjclass[2020]{15A15, 20C15, 05E05, 05E10}

\begin{abstract}
A classical result of Littlewood gives a factorisation for the Schur function
at a set of variables ``twisted'' by a primitive $t$-th root of unity, 
characterised by the core and quotient of the indexing partition.
While somewhat neglected, it has proved to be an important tool 
in the character 
theory of the symmetric group, the cyclic sieving phenomenon, plethysms of
symmetric functions and more.
Recently, similar factorisations for the characters of the groups
$\mathrm{O}(2n,\mathbb{C})$, $\mathrm{Sp}(2n,\mathbb{C})$ and 
$\mathrm{SO}(2n+1,\mathbb{C})$ were obtained by Ayyer and Kumari.
We lift these results to the level of universal characters, which 
has the benefit of making the proofs simpler and the structure of the 
factorisations more transparent.
Our approach also allows for universal character extensions of some
factorisations of a different nature originally discovered
by Ciucu and Krattenthaler, and generalised by Ayyer and Behrend.
\smallskip

\noindent\textbf{Keywords:}
Schur functions, symplectic characters, orthogonal characters, 
universal characters, $t$-core, $t$-quotient.
\end{abstract}

\maketitle

\section{Introduction}

In his $1940$ book \emph{The Theory of Group Characters and Matrix 
Representations of Groups}, D.~E.~Littlewood devotes a section to the 
evaluation of the Schur function $s_\la$ at a set of variables ``twisted''
(not his term) by a primitive $t$-th root of unity $\zeta$ 
\cite[\S7.3]{Littlewood40}.
In modern terminology,
Littlewood's theorem asserts that $s_\la$ evaluated at the
variables $\zeta^jx_i$ for $1\leq i\leq n$ and $0\leq j\leq t-1$ is
zero unless the $t$-core of $\la$ is empty.
Moreover, when it is nonzero, it factors as a product of Schur functions
indexed by the elements of the $t$-quotient of $\la$, each with the 
variables $x_1^t,\dots,x_n^t$.

The Schur functions are characters of the irreducible
polynomial representations of
the general linear group $\mathrm{GL}(n,\mathbb{C})$.
Ayyer and Kumari \cite{AK22} have recently generalised
Littlewood's theorem to the characters of the other classical groups 
$\mathrm{O}(2n,\mathbb{C})$, $\mathrm{Sp}(2n,\mathbb{C})$ and 
$\mathrm{SO}(2n+1,\mathbb{C})$ indexed by partitions.
While their factorisations are still indexed by the $t$-quotient of the 
corresponding partition, the vanishing is governed by the $t$-core having a 
particular form.
More precisely, $\tcore(\la)$ is of the form $(a\mid a+z)$ in Frobenius 
notation, where $z=-1,1,0$, for 
$\mathrm{O}(2n,\mathbb{C})$, $\mathrm{Sp}(2n,\mathbb{C})$ and 
$\mathrm{SO}(2n+1,\mathbb{C})$ respectively.
Note that these are the same partitions occurring in Littlewood's Schur 
expansion of the Weyl denominators for types $\mathrm{B}_n$, $\mathrm{C}_n$
and $\mathrm{D}_n$ \cite[p.~238]{Littlewood40} 
(see also \cite[p.~79]{Macdonald95}).

Littlewood's proof, and the proofs of Ayyer and Kumari, use the Weyl-type
expressions for the characters as ratios of alternants.
In the Schur case, Chen, Garsia and Remmel \cite{CGR84} and independently
Lascoux \cite[Theorem~5.8.2]{Lascoux?} have given an alternate 
proof based on the Jacobi--Trudi formula \eqref{Eq_JT}.
This approach was already known to Farahat, who used it to extend Littlewood's
theorem to skew Schur functions $s_{\la/\mu}$ where $\mu$ is the $t$-core of 
$\la$ \cite[Theorem~2]{Farahat58}.
The full skew Schur case was then given by Macdonald \cite[p.~91]{Macdonald95}, 
again proved using the Jacobi--Trudi formula; see Theorem~\ref{Thm_skew}
below.

In this article we lift the results of Ayyer and Kumari to the 
much more general universal characters of the groups
$\mathrm{O}(2n,\mathbb{C})$, $\mathrm{Sp}(2n,\mathbb{C})$ and 
$\mathrm{SO}(2n+1,\mathbb{C})$ as defined by Koike and Terada \cite{KT87}.
These are symmetric functions indexed 
by partitions which, under appropriate specialisation of the variables,
become actual characters of their respective groups.
In fact, these generalise the Jacobi--Trudi-type
formulas for the characters of these groups, which were first written down
by Weyl \cite[Theorems~7.8.E \& 7.9.A]{Weyl39}.
For the universal characters we generalise the notion of ``twisting''
a set of variables by introducing operators 
$\varphi_t:\La\longrightarrow\La$ for each integer $t\geq 2$ 
which act on the complete homogeneous symmetric functions as 
\begin{equation}\label{Eq_phi-def}
\varphi_t h_r=\begin{cases}h_{r/t} &\text{if $t$ divides $r$}, \\
0 &\text{otherwise}.\end{cases}
\end{equation}
It is not at all hard to show that the image of $\varphi_t$ acting on a
symmetric function at the variables $x_1^t,\dots,x_n^t$ agrees with the result 
of twisting the variables $x_1,\dots,x_n$ by $\zeta$.
The advantages of this framework for such factorisations are that the 
proofs are much simpler, and the structure of the factorisations is made
transparent.
Moreover, we are able to discuss dualities between these objects 
which are only present at the universal level.
A particularly important tool for our purposes is Koike's universal character 
$\rs_{\la,\mu}$ \eqref{Eq_rs-def} associated with 
a rational representation of $\mathrm{GL}(n,\mathbb{C})$.
This object, which is used later in Subsection~\ref{Sec_CK} to prove 
other character factorisations, 
appears to be the correct universal character analogue
of the Schur function with variables $(x_1,1/x_1,\dots,x_n,1/x_n)$.

The remainder of the paper reads as follows.
In the next section we outline the preliminaries on partitions and symmetric
functions needed to state our main results, which follow in 
Section~\ref{Sec_results}.
In the following Section~\ref{Sec_lemmas} we prepare for the proofs of these 
results by giving a series of lemmas regarding cores and quotients and their
associated signs.
The factorisations are then proved in Section~\ref{Sec_proofs},
including a detailed proof of the Schur case, following Macdonald.
The final Section~\ref{Sec_stuff} concerns other factorisation results 
relating to Schur functions and other characters.
This includes universal extensions of factorisations very different
from those already discussed originally due to Ciucu and Krattenthaler, later 
generalised by Ayyer and Behrend.

\section{Preliminaries}
\subsection{Partitions}
A \emph{partition} $\la=(\la_1,\la_2,\la_3,\dots)$ is a weakly 
decreasing sequence of nonnegative integers such that only finitely many of the
$\la_i$ are nonzero. 
The nonzero $\la_i$ are called \emph{parts} and the number of parts the 
\emph{length}, written $l(\la)$.
We say $\la$ is \emph{a partition of $n$} if 
$\abs{\la}:=\la_1+\la_2+\la_3+\cdots=n$.
Two partitions are regarded as the same if they agree up to trailing zeroes,
and the set of all partitions is written $\mathscr{P}$.
A partition is identified with its \emph{Young diagram},
which is the left-justified array of squares consisting of $\la_i$ squares in 
row $i$ with $i$ increasing downward.
For example
\smallskip
\begin{center}
\begin{tikzpicture}[scale=0.4]
\foreach \i [count=\ii] in {6,4,3,2}
\foreach \j in {1,...,\i}{\draw (\j,1-\ii) rectangle (\j+1,-\ii);}
\end{tikzpicture}
\end{center}
is the Young diagram of $(6,4,3,2)$. 
We define the \emph{conjugate} partition $\la'$ by reflecting the diagram of 
$\la$ in the main diagonal $x=y$, so that the conjugate of $(6,4,3,2)$ above is
$(4,4,3,2,1,1)$.
If $\la=\la'$ then $\la$ is called \emph{self-conjugate}.
For a square at coordinate $(i,j)$ where $1\leq i\leq l(\la)$ and 
$1\leq j\leq \la_i$ the \emph{hook length} is $h(i,j)=\la_i+\la_j'-i-j+1$.
For example the square $(1,2)$ in
\smallskip
\begin{center}
\begin{tikzpicture}[scale=0.4]
\filldraw[color=green!40] (2,0) rectangle (7,-1);
\filldraw[color=green!40] (2,-1) rectangle (3,-4);
\foreach \i [count=\ii] in {6,4,3,2}
\foreach \j in {1,...,\i}{\draw (\j,1-\ii) rectangle (\j+1,-\ii);}
\end{tikzpicture}
\end{center}
has hook length $8$, with its hook shaded.
A partition $\la$ \emph{is a $t$-core} if it contains no squares of
hook length $t$, the set of which is denoted $\mathscr{C}_t$.
For a pair of partitions $\la,\mu$ we write $\mu\subseteq\la$ if the diagram 
of $\mu$ can be drawn inside the diagram of $\la$, i.e., if 
$\mu_i\leq \la_i$ for all $i\geq 1$.
In this case we can form the \emph{skew shape} $\la/\mu$ by removing
the digram of $\mu$ from that of $\la$.
For example $(3,2,1,1)\subseteq(6,4,3,2)$ and 
the diagram of $(6,4,3,2)/(3,2,1,1)$ is given by the non-shaded squares of 
\smallskip
\begin{center}
\begin{tikzpicture}[scale=0.4]
\foreach \i [count=\ii] in {6,4,3,2}
\foreach \j in {1,...,\i}{\draw (\j,1-\ii) rectangle (\j+1,-\ii);}
\foreach \i [count=\ii] in {3,2,1,1}
\foreach \j in {1,...,\i}{\draw[fill=gray] (\j,1-\ii) rectangle (\j+1,-\ii);}
\end{tikzpicture}
\end{center}
A skew shape is called a \emph{ribbon} (or \emph{border strip}, \emph{rim hook},
\emph{skew hook}) if its diagram is connected 
and contains no $2\times 2$ square.
A $t$-ribbon is a ribbon with $t$ boxes. 
The \emph{height} of a $t$-ribbon $R$, written $\mathrm{ht}(R)$,
is one less than the number of rows it occupies.
In our example above $R=(6,4,3,2)/(3,2,1,1)$ is an $8$-ribbon with
height $\mathrm{ht}(R)=3$.
We say a skew shape is \emph{tileable by $t$-ribbons} or \emph{$t$-tileable} if 
there exists a sequence of partitions
\begin{equation}\label{Eq_RD}
\mu=\nur{0}\subseteq\nur{1}\subseteq\cdots\subseteq\nur{k-1}\subseteq\nur{k}
=\la
\end{equation}
such that $\nur{i}/\nur{i-1}$ is a $t$-ribbon for $1\leq i\leq k$.
A sequence $D=(\nur{0},\dots,\nur{k})$ (not to be confused with the
$t$-quotient of $\nu$ below, for which we use the same notation) 
satisfying \eqref{Eq_RD} is called a 
\emph{ribbon decomposition} (or \emph{border strip decomposition}) of $\la/\mu$.
We define the height of a ribbon decomposition to be the sum of the heights
of the individual ribbons:
$\mathrm{ht}(D):=\sum_{i=1}^k\mathrm{ht}(\nur{i}/\nur{i-1})$.
As shown by van Leeuwen \cite[Proposition~3.3.1]{vanLeeuwen99} and 
Pak \cite[Lemma~4.1]{Pak00} (also in \cite[\S6]{APRU21}), 
the quantity $(-1)^{\mathrm{ht}(D)}$ is the same for every ribbon decomposition
of $\la/\mu$.
We therefore define the \emph{sign} of a $t$-tileable skew shape $\la/\mu$ as
\begin{equation}\label{Eq_sgn-def}
\sgn_t(\la/\mu):=(-1)^{\mathrm{ht}(D)}.
\end{equation}

Let $\rk(\la)$ be the greatest integer such that $\rk(\la)\geq \la_{\rk(\la)}$,
usually called the \emph{Frobenius rank} of $\la$.
Equivalently, $\rk(\la)$ is the side length of the largest square which fits 
inside the diagram of $\la$ (the \emph{Durfee square}).
A partition can alternatively be written in \emph{Frobenius notation} as
\[
\la=\big(\la_1-1,\dots,\la_{\rk(\la)}-\rk(\la)\mid
\la'_1-1,\dots,\la'_{\rk(\la)}-\rk(\la)\big).
\]
Any pair of integer sequences 
$a_1>\cdots>a_k\geq0$ and $b_1>\cdots>b_k\geq0$ thus
determines a partition $\la=(a\mid b)$ with $\rk(\la)=k$.
For $z\in\mathbb{Z}$ and an integer sequence of predetermined length 
$a=(a_1,\dots,a_k)$ we write $a+z:=(a_1+z,\dots,a_k+z)$.
Following Ayyer and Kumari \cite[Definition~2.9]{AK22},
$\la$ is called \emph{$z$-asymmetric} if it is of the form $\la=(a\mid a+z)$ 
for some integer sequence $a$ and integer $z$.
Clearly a $0$-asymmetric partition is self-conjugate.
Partitions which are $-1$- and $1$-asymmetric are called
\emph{orthogonal} and \emph{symplectic} respectively.

\subsection{Cores and quotients}
We now describe the $t$-core and $t$-quotient of $\la$ arithmetically
following \cite[p.~12]{Macdonald95}.
There are many equivalent descriptions, see for instance 
\cite{GKS90,HJ11,JK81,WW20}.
We begin with the \emph{beta set} of a partition, 
which is simply the set of $n$ integers
\[
\beta(\la;n):=\{\la_1+n-1,\la_2+n-2,\dots,\la_{n-1}+1,\la_n\},
\]
where $n\geq l(\la)$ is fixed.
The number of elements in this set congruent to $r$ modulo 
$t$ is denoted by $m_r(\la;n)=m_r$.
Each element which falls into residue class $r$ for $0\leq r\leq t-1$ can
be written as $\xir{r}_kt+r$ for some integers 
$\xir{r}_1>\cdots>\xir{r}_{m_r}\geq 0$.
These integers are used to define a partition with parts
$\lar{r}_k=\xir{r}_k-m_r(\la;n)+k$ where $1\leq k\leq m_r(\la;n)$,
and the ordered sequence $(\lar{0},\dots,\lar{t-1})$ of these partitions
is called the \emph{$t$-quotient}.
The precise order of the constituents of the $t$-quotient depends on the 
residue class of $n$ modulo $t$. However, the orders only differ by cyclic
permutations, and Macdonald comments that it is best to think of the
quotient as a sort-of ``necklace'' of partitions.
To simplify things somewhat, we adopt the convention that the $t$-quotient is 
always computed with $n$ a multiple of $t$, so that the order of its 
constituents is fixed.
To define the $t$-core, one writes down the $n$ distinct integers $kt+r$ where
$0\leq k\leq m_r(\la;n)-1$ and $0\leq r\leq t-1$ in descending order,
say as $\tilde\xi_1>\dots>\tilde\xi_n$. Then $\tcore(\la)_i:=\tilde\xi_i-n+i$.
If $\tcore(\la)$ is empty then we say $\la$ \emph{has empty $t$-core}. 

It will prove useful later on to work with the
\emph{bead configurations} (or \emph{bead diagrams}, \emph{abacus model})
of James and Kerber \cite[\S2.7]{JK81}, which give a different model for
$t$-cores and $t$-quotients.
The ``board'' for a bead configuration is the set of nonnegative integers
arranged in $t$ downward-increasing columns, called \emph{runners}, 
according to their residues modulo $t$.
A bead is then placed at the space corresponding to each element of 
$\beta(\la;n)$.
For an example, let $\la=(4,4,3,2,1)$ so that
$\beta(\la;6)=\{9,8,6,4,2,0\}$.
Then the bead configuration for $\la$ with $t=3$ and $n=6$ and the beads
labelled by their position is
\begin{center}
\begin{tikzpicture}[scale=0.8]
\foreach \i in {0,...,2}{
\draw[dashed,opacity=0.5] (\i,0) -- (\i,-3);}
\foreach \j in {0,...,3}{
\foreach \i in {0,...,2}{
\draw[fill=gray] (\i,-\j) circle (2pt);}}
\draw[fill=white] (2,0) circle (6pt); \draw[fill=white] (0,0) circle (6pt);
\draw[fill=white] (1,-1) circle (6pt); \draw[fill=white] (0,-2) circle (6pt);
\draw[fill=white] (2,-2) circle (6pt); \draw[fill=white] (0,-3) circle (6pt);
\draw (0,0) node {$\scriptstyle 0$};
\draw (2,0) node {$\scriptstyle 2$};
\draw (1,-1) node {$\scriptstyle 4$};
\draw (0,-2) node {$\scriptstyle 6$};
\draw (2,-2) node {$\scriptstyle 8$};
\draw (0,-3) node {$\scriptstyle 9$};
\end{tikzpicture}
\end{center}
Moving a bead up one space is equivalent to reducing one of the elements of
$\beta(\la;n)$ by $t$.
This is, in turn, equivalent to removing a $t$-ribbon 
from $\la$ such that what remains is still a Young diagram
(see for instance \cite[p.~12]{Macdonald95}).
Pushing all beads to the top will give the bead configuration of
$\tcore(\la)$, and this is clearly independent of the order in which the beads
are pushed. It follows that $\tcore(\la)$ is the unique partition obtained
by removing $t$-ribbons (in a valid way) from the diagram of $\la$ until
it is no longer possible to do so.
We note that if removing a ribbon $R$ corresponds to moving a bead from position
$b$ to $b-t$, then $\mathrm{ht}(R)$ is equal to the number of beads lying
at the positions strictly between $b-t$ and $b$.
The $t$-quotient can be obtained from the bead configuration by reading the 
$r$-th runner, bottom-to-top, as a bead configuration with $m_r(\la;n)$ beads.
For our example, this means that $\beta(\tcore(\la);6)=\{6,5,3,2,1,0\}$,
so that $\tcore(\la)=(1,1)$, with the quotient $((1,1),(1),(1))$ computed
similarly.

The above procedure of computing the $t$-core and $t$-quotient actually
encodes a bijection
\begin{align*}
\phi_t:\mathscr{P}&\longrightarrow\mathscr{C}_t\times\mathscr{P}^t \\
\la &\longmapsto\big(\tcore(\la),(\lar{0},\dots,\lar{t-1})\big),
\end{align*}
such that $\abs{\la}=\abs{\tcore(\la)}+t(\abs{\lar{0}}+\cdots+\abs{\lar{t-1}})$.
The arithmetic description of this correspondence was first written down by 
Littlewood \cite{Littlewood51}. 
The idea of removing ribbons from a partition until a unique core is obtained 
goes back to Nakayama \cite{Nakayama40}.
The $t$-quotient of a partition has its origin in the \emph{star diagrams}
of Nakayama, Osima, Robinson and Staal \cite{NO51,Robinson48,Staal50},
which were shown to be equivalent to Littlewood's $t$-quotient by Farahat 
\cite{Farahat53}.

Let $w_t(\la;n)$ be the permutation of $\beta(\la;n)$ which sorts the elements
of the beta set so that their residues modulo $t$ are increasing, and the
elements within each residue class decrease.
The sign of $w_t(\la;n)$ will be denoted $\sgn(w_t(\la;n))$.
The permutation $w_t(\la;n)$ can also be read off the bead configuration
by first labelling the beads ``backwards'': label the bead with largest place 
$1$, second-largest $2$, 
and so on. Reading the labels column-wise from bottom-to-top gives 
$w_t(\la;n)$ in one-line notation.
An inversion in this permutation corresponds to a pair of beads $b_1,b_2$ such 
that $b_2$ lies weakly below and strictly to the right of $b_1$.
With the same example as before $w_3((4,4,3,2,1);6)=136425$ and 
the bead at position $0$ generates three inversions, as it ``sees''
the beads $2, 4$ and $8$.

As follows from the above, a partition $\la$ has empty $t$-core if and only
if it is $t$-tileable.
In our results below we will need the following characterisation of when
a skew shape is $t$-tileable, generalising the notion of ``empty $t$-core''
to this case.
We briefly recall our convention that $t$-quotients are always computed
with the number of beads in the bead configuration a multiple of $t$.
\begin{lemma}\label{Lem_tilable}
A skew shape $\la/\mu$ is tileable by $t$-ribbons if and only if 
$\tcore(\la)=\tcore(\mu)$ and $\mur{r}\subseteq\lar{r}$ for each 
$0\leq r\leq t-1$.
\end{lemma}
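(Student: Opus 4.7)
The plan is to reduce everything to the abacus model, where the statement becomes almost tautological. Recall that the bead configuration of $\la$ (with $n$ beads, $n$ a multiple of $t$) has $n/t$ positions available on each runner, and removing a $t$-ribbon from a partition corresponds exactly to moving a single bead up one position on its runner (from some occupied position to the empty position immediately above it on the same runner). In particular, a ribbon removal never changes the number of beads on any runner, and the height of the ribbon equals the number of beads strictly between the old and new positions of the moved bead.

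For the forward direction, suppose $\la/\mu$ is $t$-tileable via a chain $\mu=\nur{0}\subseteq\cdots\subseteq\nur{k}=\la$. Reading this chain backwards gives a sequence of $k$ single-bead upward moves transforming the bead configuration of $\la$ into that of $\mu$. Since each move stays within a runner, the number of beads on each runner of $\mu$ equals that on the corresponding runner of $\la$. Pushing all beads up to the top then yields the same core configuration on both sides, so $\tcore(\la)=\tcore(\mu)$. Moreover, restricting the entire sequence of moves to any fixed runner $r$ shows that the bead configuration encoding $\mur{r}$ is obtained from that of $\lar{r}$ by a sequence of upward single-step moves, which is exactly the abacus characterisation of $\mur{r}\subseteq\lar{r}$.

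For the converse, assume the two conditions. The equality $\tcore(\la)=\tcore(\mu)$ (together with our convention that $n$ is a multiple of $t$) guarantees that the bead configurations of $\la$ and $\mu$ have the same number of beads on each runner. Consequently the restrictions to the $r$-th runner are bead configurations for $\lar{r}$ and $\mur{r}$ with the same number of beads, and the hypothesis $\mur{r}\subseteq\lar{r}$ means the $r$-th runner of $\mu$ is obtainable from that of $\la$ by finitely many upward single-step moves. Performing all such moves, one runner at a time, produces a chain of bead configurations from $\la$ down to $\mu$, each step of which corresponds to the removal of a $t$-ribbon; the resulting chain of partitions is the required $t$-ribbon decomposition of $\la/\mu$.

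The only point that requires a brief justification is that every upward single-step move of a bead on the abacus does correspond to removing a legitimate $t$-ribbon from the current partition (i.e.\ leaves a Young diagram). This is standard and already recalled in the text just above the lemma: arbitrary bead configurations biject with partitions, and moving a bead from position $b$ to the (necessarily empty) position $b-t$ strips off the $t$-ribbon whose height is the number of beads occupying positions between $b-t$ and $b$. With that observation in hand, no further obstacle arises and the two directions above complete the proof.
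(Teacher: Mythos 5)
Your proposal is correct and follows essentially the same abacus-based approach as the paper: both arguments translate $t$-ribbon removal into single-step upward bead moves, observe that this preserves runner bead counts (hence equal cores), and identify containment of quotient components with containment of bead configurations on the corresponding runners. You spell out the converse direction in slightly more detail than the paper, which simply notes it is "now clear," but no new idea is involved.
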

\begin{proof}
The skew shape being $t$-tileable is equivalent to the diagram of
$\mu$ being obtainable from the diagram of $\la$ by removing $t$-ribbons.
In other words, we can obtain the bead configuration of $\mu$ from that of 
$\la$, where both have $nt$ beads, by moving beads upwards.
Assume that this is the case.
Then $m_r(\la;nt)=m_r(\mu;nt)$ for each $0\leq r\leq t-1$, so that the $r$-th
runner has the same number of beads in each diagram.
This implies that $\tcore(\la)=\tcore(\mu)$.
It also follows that the $i$-th bead in each runner of $\la$'s
bead configuration must lie weakly below the $i$-th bead in the same
runner of $\mu$'s bead configuration.
Equivalently, $\mur{r}_i\leq \lar{r}_i$ for all $0\leq r\leq t-1$ and
$1\leq i\leq m_r(\la;nt)$, which in turn is equivalent to 
$\mur{r}\subseteq\lar{r}$.
The reverse direction is now clear.
\end{proof}
Note that the lemma is also true when the $t$-quotients of $\la$ and
$\mu$ are computed using the same integer $n$ of any residue class modulo $t$.
If $\la/\mu$ is $t$-tileable, then we think of 
$\lar{0}/\mur{0},\dots,\lar{t-1}/\mur{t-1}$ as its $t$-quotient.
When $\la/\mu$ is not $t$-tileable, it is not so clear how to define
the $t$-quotient.

\subsection{Symmetric functions and universal characters}
Here we discuss some basics of the theory of symmetric functions,
following \cite{Macdonald95}.
Let $\La$ denote the \emph{ring of symmetric functions} in an 
arbitrary countable set of variables $X=(x_1,x_2,x_3,\dots)$, called an 
\emph{alphabet}.
Where possible, we write  elements of $\La$ without reference to an alphabet if
the expression is independent of the chosen alphabet.
If for a positive integer $n$ one sets $x_i=0$ for all $i>n$ then the 
elements of
$\La$ reduce to symmetric polynomials in the variables $(x_1,\dots,x_n)$.
Another common specialisation sets 
$x_{n+i}=x_i^{-1}$ for $1\leq i\leq n$ and  $x_i=0$ for $i>2n$.
This gives Laurent polynomials in the $x_i$ invariant under permutation and 
inversion of the variables (i.e., $\mathrm{BC}_n$-symmetric functions).
We will later write $(x_1^\pm,\dots,x_n^\pm)$ for this alphabet.

Two fundamental algebraic bases for $\La$ are the \emph{complete 
homogeneous symmetric functions} and the \emph{elementary symmetric functions},
defined for any positive integer $r$ by
\[
h_r(X):=\sum_{1\leq i_1\leq\cdots\leq i_r}x_{i_1}\cdots x_{i_r}
\quad\text{and}\quad
e_r(X):=\sum_{1\leq i_1<\cdots<i_r}x_{i_1}\cdots x_{i_r},
\]
respectively. We further set $h_0=e_0:=1$ and $h_{-r}=e_{-r}=0$ for 
positive $r$.
These admit the generating functions
\begin{align*}
H_z(X)&:=\sum_{r\geq0}z^rh_r(X)=\prod_{i\geq1}\frac{1}{1-zx_i}\\
E_z(X)&:=\sum_{r\geq0}z^re_r(X)=\prod_{i\geq1}(1+zx_i).
\end{align*}
The $h_r$ and $e_r$ for $r\geq 1$ are algebraically independent
over $\mathbb{Z}$ and generate $\La$.
In view of this, we can define a homomorphism $\omega:\La\longrightarrow\La$
by $\omega h_r=e_r$.
It then follows from the relation $H_z(X)E_{-z}(X)=1$ that
$\omega e_r=h_r$, so that $\omega$ is an involution.
We also define the power sums by
\[
p_r(X):=\sum_{i\geq1} x_i^r,
\]
for $r\geq1$ and $p_0:=1$.
These satisfy $\omega p_r=(-1)^{r-1}p_r$.

The most important family of symmetric functions are the \emph{Schur functions}.
These have several definitions, but
for our purposes it is best to define them, already for skew shapes, by the 
\emph{Jacobi--Trudi formula}.
If $\la/\mu$ is a skew shape and $n$ an integer such that 
$n\geq l(\la)$ we define
\begin{equation}\label{Eq_JT}
s_{\la/\mu}:=\det_{1\leq i,j\leq n}(h_{\la_i-\mu_j-i+j}).
\end{equation}
This is independent of $n$ as long as $n\geq l(\la)$. 
If $\mu\not\subseteq\la$ then we set $s_{\la/\mu}:=0$.
There is also an equivalent formula in terms of the $e_r$,
called the \emph{dual Jacobi--Trudi formula} (rarely also the
\emph{N\"agelsbach--Kostka identity})
\[
s_{\la/\mu}=\det_{1\leq i,j\leq m}(e_{\la_i'-\mu_j'-i+j}).
\]
Restricting to the $\mu$ empty case, 
we have $s_{(r)}=h_r$ and $s_{(1^r)}=e_r$.
Moreover, it is clear that $\omega s_{\la/\mu}=s_{\la'/\mu'}$.

If the set of variables $(x_1,\dots,x_n)$ is finite then the
Schur function for $\mu=0$ admits another definition as a ratio of alternants
\begin{equation}\label{Eq_s-alt}
s_\la(x_1,\dots,x_n)=\frac{\det_{1\leq i,j\leq n}(x_i^{\la_j+n-j})}
{\det_{1\leq i,j\leq n}(x_i^{n-j})}.
\end{equation}
The denominator is the Vandermonde determinant and has the product 
representation 
$\det_{1\leq i,j\leq n}(x_i^{n-j})=\prod_{1\leq i<j\leq n}(x_i-x_j)$.
In this case we also define $s_\la(x_1,\dots,x_n)=0$ if $l(\la)>n$.
If $\la$ is a partition of length at most $n$, then
\begin{equation}\label{Eq_det-rep}
s_{(\la_1+1,\dots,\la_n+1)}(x_1,\dots,x_n)
=(x_1\cdots x_n)s_{(\la_1,\dots,\la_n)}(x_1,\dots,x_n).
\end{equation}
This allows for Schur functions with a finite set of $n$ variables to be 
extended to weakly decreasing sequences of integers of length exactly $n$.

Following Koike and Terada we define the \emph{universal characters} for  
$\mathrm{O}(2n,\mathbb{C})$ and $\mathrm{Sp}(2n,\mathbb{C})$ as 
the symmetric functions \cite[Definition~2.1.1]{KT87}
\begin{align}\label{Eq_oh}
\o_\la&:=\det_{1\leq i,j\leq n}\big(h_{\la_i-i+j}-h_{\la_i-i-j}\big)\\
\sp_\la&:=\frac{1}{2}
\det_{1\leq i,j\leq n}\big(h_{\la_i-i+j}+h_{\la_i-i-j+2}\big),\label{Eq_sph}
\end{align}
where $n\geq l(\la)$. 
Like the Schur functions, these determinants also have dual versions
\begin{align}
\o_\la&=\frac{1}{2}\notag
\det_{1\leq i,j\leq m}\big(e_{\la_i'-i+j}+e_{\la_i'-i-j+2}\big)\\
\sp_\la&=
\det_{1\leq i,j\leq m}\big(e_{\la_i'-i+j}-e_{\la_i'-i-j}\big), \label{Eq_sp-e}
\end{align}
where $m\geq \la_1$.
From this it is clear that $\omega \o_\la=\sp_{\la'}$.
Koike alone added a third universal character for the
group $\mathrm{SO}(2n+1,\mathbb{C})$ \cite[Definition~6.4]{Koike97} 
(see also \cite[Equation~(3.8)]{LRW20})
\[
\so_\la:=\det_{1\leq i,j\leq n}\big(h_{\la_i-i+j}+h_{\la_i-i-j+1}\big)
=\det_{1\leq i,j\leq m}\big(e_{\la_i'-i+j}+e_{\la_i'-i-j+1}\big).
\]
This universal character is self-dual under $\omega$, so 
$\omega \so_\la=\so_{\la'}$.
For later convenience we also define a variant of the above as
\[
\so_\la^{-}:=\det_{1\leq i,j\leq n}\big(h_{\la_i-i+j}-h_{\la_i-i-j+1}\big)
=\det_{1\leq i,j\leq m}\big(e_{\la_i'-i+j}-e_{\la_i'-i-j+1}\big).
\]
If $X=(x_1,x_2,x_3,\dots)$ is a set of variables (which may be finite or
countable)
and $-X:=(-x_1,-x_2,-x_3,\dots)$, then
\[
\so_\la^{-}(X)=(-1)^{\abs{\la}}\so_\la(-X),
\]
since the $h_r$ and $e_r$ are homogeneous of degree $r$.

For $l(\la)\leq n$, each of the three above universal characters become actual 
characters of irreducible representations of their associated groups
when specialised to $(x_1^\pm,\dots,x_n^\pm)$ (hence the name
universal characters).

The irreducible polynomial representations of $\mathrm{GL}(n,\mathbb{C})$
are indexed by partitions of length at most $n$.
On the other hand, the irreducible rational representations are indexed by
weakly decreasing sequences of integers of length $n$,
which are called \emph{staircases} by Stembridge \cite{Stembridge87}.
Such sequences are equivalent to pairs of partitions $\la,\mu$ 
such that $l(\la)+l(\mu)\leq n$.
Given such a pair, one defines the associated staircase $[\la,\mu]$ by
$[\la,\mu]_i:=\la_i-\mu_{n-i+1}$ for $1\leq i\leq n$.
The characters of the rational representations of $\mathrm{GL}(n,\mathbb{C})$
are then given by $s_{[\la,\mu]}(x_1,\dots,x_n)$ for all staircases with
$n$ entries.
Note that \eqref{Eq_det-rep} implies that this object is just a Schur function
up to a power of $x_1\cdots x_n$.
In \cite{Littlewood44}, Littlewood gave the expansion
\begin{equation}\label{Eq_Littlewood-def}
s_{[\la,\mu]}(x_1,\dots,x_n)
=\sum_{\nu}(-1)^{\abs{\nu}}s_{\la/\nu}(x_1,\dots,x_n)
s_{\mu/\nu'}(1/x_1,\dots,1/x_n).
\end{equation}
For a pair of partitions $\la,\mu$ and sets of indeterminates $X$, $Y$,
this may be used to define the universal character associated to a rational
representation of $\mathrm{GL}(n,\mathbb{C})$ as
\begin{equation}\label{Eq_rs-def}
\rs_{\la,\mu}(X;Y)
:=\sum_{\nu}(-1)^{\abs{\nu}}s_{\la/\nu}(X)s_{\mu/\nu'}(Y).
\end{equation}
Note that the only terms which contribute are those with $\nu\subseteq\la$ 
and $\nu'\subseteq\mu$.
If we let $\omega_X$ and $\omega_Y$ denote the involution $\omega$ acting on
the set of variables in its subscript, then
\begin{align*}
\omega_X\omega_Y\rs_{\la,\mu}(X;Y)
&=\sum_{\nu}(-1)^{\abs{\nu}}s_{\la'/\nu'}(X)s_{\mu'/\nu}(Y)\\
&=\sum_{\nu'}(-1)^{\abs{\nu}}s_{\la'/\nu}(X)s_{\mu'/\nu'}(Y)\\
&=\rs_{\la',\mu'}(X;Y).
\end{align*}
As shown by Koike \cite{Koike89}, this object has a Jacobi--Trudi-type 
expression as a block matrix
\begin{equation}\label{Eq_rs-h}
\rs_{\la,\mu}(X;Y)=
\det_{1\leq i,j\leq n+m}
\begin{pmatrix}
\big(h_{\la_i-i+j}(X)\big)_{1\leq i,j\leq n} & 
\big(h_{\la_i-i-j+1}(X)\big)_{\substack{1\leq i\leq n\\ 1\leq j\leq m}} \\
\big(h_{\mu_i-i-j+1}(Y)\big)_{\substack{1\leq i\leq m\\ 1\leq j\leq n}} &
\big(h_{\mu_i-i+j}(Y)\big)_{1\leq i,j\leq m}
\end{pmatrix},
\end{equation}
where $n\geq l(\la)$ and $m\geq l(\mu)$.
As for the other determinants, this is independent of $n$ and $m$ as long as
$n\geq l(\la)$ and $m\geq l(\mu)$.
The relation under $\omega_X\omega_Y$ implies we have the dual 
form \cite[Definition~2.1]{Koike89}
\begin{equation}\label{Eq_rs-e}
\rs_{\la,\mu}(X;Y)=
\det_{1\leq i,j\leq n+m}
\begin{pmatrix}
\big(e_{\la_i'-i+j}(X)\big)_{1\leq i,j\leq n} & 
\big(e_{\la_i'-i-j+1}(X)\big)_{\substack{1\leq i\leq n\\ 1\leq j\leq m}} \\
\big(e_{\mu_i'-i-j+1}(Y)\big)_{\substack{1\leq i\leq m\\ 1\leq j\leq n}} &
\big(e_{\mu_i'-i+j}(Y)\big)_{1\leq i,j\leq m}
\end{pmatrix},
\end{equation}
where $n\geq \la_1$ and $m\geq\mu_1$.
The definition \eqref{Eq_rs-def} and the determinants \eqref{Eq_rs-h} 
and \eqref{Eq_rs-e} are related by taking the Laplace expansion of 
each determinant according to its presented block structure;
see \cite[Equation~(2.1)]{Koike89}.
Also, by the definition \eqref{Eq_rs-def} and \eqref{Eq_Littlewood-def} it is
immediate that, for $l(\la)+l(\mu)\leq n$,
\[
\rs_{\la,\mu}(x_1,\dots,x_n;1/x_1,\dots,1/x_n)=s_{[\la,\mu]}(x_1,\dots,x_n).
\]
We will always take $X=Y$ in $\rs_{\la,\mu}(X;Y)$, which we write as 
$\rs_{\la,\mu}$ in the rest of the paper.
In particular we note that $\rs_{\la,\mu}=\rs_{\mu,\la}$.

As mentioned in the introduction, the notion of twisting the set of variables
$x_1,\dots,x_n$ by a primitive $t$-th root of unity $\zeta$ is replaced by
the operator $\varphi_t$ \eqref{Eq_phi-def}, which has been considered
by Macdonald \cite[p.~91]{Macdonald95} and, for $t=2$, by Baik and Rains
\cite[p.~25]{BR01}.
Let $X^t:=(x_1^t,x_2^t,x_3^t\dots)$ and denote by $\psi_t$ the
homomorphism
\begin{align*}
\psi_t: \La&\longrightarrow \La_{X^t}\\
f&\longmapsto f(X,\zeta X,\dots,\zeta^{t-1}X).
\end{align*}
Since $\psi_t H_z(X)=H_{z^t}(X^t)$, both $\varphi_t$ and $\psi_t$ act
on the $h_r$ in the same way, i.e., the diagram
\begin{equation}\label{Eq_diagram}
\begin{tikzcd}
\La\arrow{r}{\varphi_t} \arrow{dr}{\psi_t} & \La \arrow{d}{X^t}\\
& \La_{X^t}
\end{tikzcd}
\end{equation}
commutes, where the arrow labelled $X^t$ is the substitution map.
This implies the claim of the introduction that the action of $\varphi_t$ is
equivalent to twisting the alphabet $X$ by a primitive $t$-th root of unity
$\zeta$.
If one wishes to think about this as a map $\La_X\longrightarrow\La_X$
where $X$ is some concrete alphabet, then substitute each $x\in X$ by its set 
of $t$-th roots $x^{1/t},\zeta x^{1/t},\dots,\zeta^{t-1}x^{1/t}$ and
evaluate this expression.
By the action of this map on the $h_r$, such a map gives a symmetric function
again in the variables $X$.
Using the generating function $E_z(X)$ one may also show that
\cite[\S5.8]{Lascoux?}
\[
\varphi_t e_r=\begin{cases}
(-1)^{r(t-1)/t}e_{r/t} & \text{if $t$ divides $r$}, \\
0 & \text{otherwise}.
\end{cases}
\]
Therefore $\omega$ and $\varphi_t$ commute if $t$ is odd, but not in general.
Proposition~\ref{Prop_signs} shows that, in some cases, the maps commute up
to a computable sign.
Different, but closely related operators are discussed at the end of this paper.
 
\section{Summary of results}\label{Sec_results}
With the preliminary material of the previous section under our belts, we
are now ready to state our main results regarding factorisations of 
universal characters under $\varphi_t$. 
The first of these is the action of the map on the skew Schur functions.

\begin{theorem}\label{Thm_skew}
We have that $\varphi_t s_{\la/\mu}=0$ unless $\la/\mu$ is tileable by 
$t$-ribbons, in which case
\[
\varphi_t s_{\la/\mu}=\sgn_t(\la/\mu)
\prod_{r=0}^{t-1}s_{\lar{r}/\mur{r}}.
\]
\end{theorem}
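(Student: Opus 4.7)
I would follow the Jacobi--Trudi approach indicated in the introduction. Fix $n=Nt$ large enough that $n\geq l(\la)$, and apply the substitution homomorphism $\psi_t$ to the Jacobi--Trudi expression \eqref{Eq_JT}. Because $\psi_t$ is a genuine ring homomorphism, it passes inside the determinant:
\[
\psi_t s_{\la/\mu}=\det_{1\leq i,j\leq n}\bigl(\psi_t h_{\la_i-\mu_j-i+j}\bigr).
\]
By the commutative diagram \eqref{Eq_diagram} the $(i,j)$ entry equals $h_{(\la_i-\mu_j-i+j)/t}(X^t)$ when $t$ divides $\la_i-\mu_j-i+j$ and is $0$ otherwise, and by injectivity of the substitution $X\mapsto X^t$ on $\La$ it is enough to evaluate this determinant in $\La$.

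Next I would translate the divisibility condition into the beta-set language. Setting $\alpha_i=\la_i+n-i$ and $\beta_j=\mu_j+n-j$, the condition $t\mid(\la_i-\mu_j-i+j)$ becomes $\alpha_i\equiv\beta_j\pmod t$. Permuting rows by $w_t(\la;n)$ and columns by $w_t(\mu;n)$ groups the entries into $t$ blocks indexed by residue class, with all off-diagonal blocks zero; the $r$-th diagonal block has shape $m_r(\la;n)\times m_r(\mu;n)$. If $m_r(\la;n)\neq m_r(\mu;n)$ for some $r$, that block is not square and the full determinant vanishes; this is exactly the case $\tcore(\la)\neq\tcore(\mu)$. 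Otherwise, writing $\xir{r}_i(\la)=\lar{r}_i+m_r-i$ (and similarly for $\mu$), the $(i,j)$ entry of the $r$-th block simplifies to $h_{\lar{r}_i-\mur{r}_j-i+j}$, which is exactly the Jacobi--Trudi matrix for $s_{\lar{r}/\mur{r}}$. If some $\mur{r}\not\subseteq\lar{r}$, that factor vanishes, so together with the core condition we recover Lemma~\ref{Lem_tilable}: the determinant is nonzero precisely when $\la/\mu$ is $t$-tileable, and in that case equals $\prod_{r=0}^{t-1}s_{\lar{r}/\mur{r}}$ up to a sign.

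The remaining---and in my view main---obstacle is to identify the sign as $\sgn_t(\la/\mu)$. The two permutations introduce the sign $\sgn(w_t(\la;n))\sgn(w_t(\mu;n))$, so what is needed is a lemma (to be proved in Section~\ref{Sec_lemmas}) asserting that for any $t$-tileable $\la/\mu$ this product equals $\sgn_t(\la/\mu)$. I would prove it by induction on the number of $t$-ribbons removed in going from $\la$ to $\mu$: removing a single $t$-ribbon corresponds to sliding one bead up one runner, and the associated change in $\sgn(w_t(\,\cdot\,;n))$ is $(-1)^h$, where $h$ is the number of beads strictly between the old and new positions of the moved bead---precisely the height of the ribbon removed. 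Since the left-hand side $\varphi_t s_{\la/\mu}$ is well defined independently of the ribbon decomposition, this confirms the sign-independence already guaranteed by van Leeuwen and Pak and completes the proof.
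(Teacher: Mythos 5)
Your proposal is correct and follows essentially the same approach as the paper, which reproduces Macdonald's argument: permute rows and columns of the Jacobi--Trudi determinant by $w_t(\la;nt)$ and $w_t(\mu;nt)$ to obtain a block-diagonal structure, identify the blocks as the Jacobi--Trudi matrices of the $s_{\lar{r}/\mur{r}}$ via the beta-set arithmetic (Lemma~\ref{Lem_phi}), and obtain the sign from the induction-on-ribbons argument you sketch, which is precisely Lemma~\ref{Lem_signs0}. The only cosmetic difference is that you work with the substitution homomorphism $\psi_t$ and appeal to injectivity of $X\mapsto X^t$, whereas the paper applies $\varphi_t$ directly; these are equivalent by the commutative diagram \eqref{Eq_diagram}.
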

For $\mu$ empty, this result is due to Littlewood
\cite[p.~131]{Littlewood40}, who proves it by direct manipulation of the 
ratio of alternants \eqref{Eq_s-alt}.
By \eqref{Eq_diagram} with $X=(x_1,\dots,x_n)$, one can recover Littlewood's 
result by simply evaluating the right-hand side of the equation
at $(x_1^t,\dots,x_n^t)$.
He also states the $\mu$ empty case of the theorem 
in the language of symmetric group characters:
see both \cite[p.~144]{Littlewood40} and \cite[p.~340]{Littlewood51}.
The generalisation to skew characters was 
discovered by Farahat \cite{Farahat54} (see also \cite[Theorem~3.3]{EPW14}).
The form we state here is precisely that of Macdonald
\cite[p.~91]{Macdonald95}.
Curiously, Prasad recently rediscovered the $\mu$ empty case
independently, with a proof identical to Littlewood's, but
in a more representation-theoretic context \cite{Prasad16}.
A version of the result for Schur's $P$-and $Q$-functions has been given by
Mizukawa \cite[Theorem~5.1]{Mizukawa02}.

Theorem~\ref{Thm_skew} has been rediscovered many times for 
both skew and straight shapes, and often only in special cases.
We make no attempt to give a complete history, but it appears to us that 
the theorem deserves to be better known.
The interested reader can consult \cite{Wildon22} for some exposition on 
the character theory side of this story. 
On the symmetric functions side, such an exposition is lacking in the
literature.

We now state, in sequence, the three factorisations lifting
\cite[Theorems~2.11, 2.15 \& 2.17]{AK22} to the level of universal characters,
beginning with the universal orthogonal character.
\begin{theorem}\label{Thm_AK_o}
Let $\la$ be a partition of length at most $nt$. 
Then $\varphi_t\o_\la=0$ unless $\tcore(\la)$ is orthogonal, in which case
\[
\varphi_t\o_\la=
(-1)^{\varepsilon^{\o}_{\la;nt}} \sgn(w_t(\la;nt))
\o_{\lar{0}}\prod_{r=1}^{\lfloor(t-1)/2\rfloor}\rs_{\lar{r},\lar{t-r}}
\times\begin{cases}
\so_{\lar{t/2}}^- & \text{$t$ even}, \\
1 & \text{$t$ odd},
\end{cases}
\]
where
\[
\varepsilon^{\o}_{\la;nt}
=\sum_{r=\lfloor(t+2)/2\rfloor}^{t-1}\binom{m_r(\la;nt)+1}{2}
+\rk(\tcore(\la))
+\begin{cases}
\binom{n+1}{2}+n\rk(\tcore(\la)) & \text{$t$ even}, \\
0 & \text{$t$ odd}.
\end{cases}
\]
\end{theorem}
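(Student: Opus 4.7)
The plan is to apply the operator $\varphi_t$ directly to the Jacobi--Trudi-type determinant \eqref{Eq_oh} with its size taken to be $N=nt$. Since $\varphi_t$ acts entrywise on $h_r$, we may distribute it over the entries of the determinant. Writing $\beta_i=\la_i+N-i$ for the beta-numbers, the $(i,j)$ entry $h_{\beta_i+j-N}-h_{\beta_i-j-N}$ survives $\varphi_t$ only when $j\equiv -\beta_i\pmod{t}$ (for the first term) or $j\equiv \beta_i\pmod{t}$ (for the second term), since $t\mid N$. Using the $h$-form rather than the $e$-form avoids the extra sign coming from $\varphi_t e_r$.

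The next step is to permute rows and columns to expose block structure. Sorting rows by the residue class $r$ of $\beta_i$ modulo $t$ (and decreasingly within each class) is effected by the permutation $w_t(\la;N)$, contributing $\sgn(w_t(\la;N))$. Columns are also grouped by residue, with a carefully chosen internal ordering that matches the standard orderings in the Jacobi--Trudi formulas for $\o$ and $\so^-$ and in Koike's block determinant \eqref{Eq_rs-h}. In the resulting matrix the $(r,s)$-block can only be nonzero when $s\equiv\pm r\pmod{t}$. For the determinant not to vanish, the rows of residue $r$ must fill out exactly the columns of residues $r$ and $t-r$, forcing $m_0(\la;N)=n$, $m_{t/2}(\la;N)=n$ (when $t$ is even), and $m_r(\la;N)+m_{t-r}(\la;N)=2n$ otherwise. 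A short bead-configuration argument (one of the lemmas prepared in Section~\ref{Sec_lemmas}) identifies these equalities with $\tcore(\la)$ being $-1$-asymmetric, i.e.\ orthogonal. When this holds, the matrix block-diagonalises along the runner-pairs $\{0\}$, $\{t/2\}$ (for $t$ even) and $\{r,t-r\}$ with $1\le r<t/2$. Translating $\xir{r}_k$ into $\lar{r}_k=\xir{r}_k-m_r+k$, the $\{0\}$-block becomes the Jacobi--Trudi matrix for $\o_{\lar{0}}$; the $\{t/2\}$-block becomes that for $\so^-_{\lar{t/2}}$, the minus sign reflecting the internal $j\leftrightarrow -j$ symmetry on the half-runner; and each $\{r,t-r\}$-block, with its four sub-blocks arising from the $\pm j$ contributions at residues $r$ and $t-r$, reassembles into Koike's block determinant \eqref{Eq_rs-h} for $\rs_{\lar{r},\lar{t-r}}$.

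The main obstacle is the precise tracking of all the signs so that they combine into $(-1)^{\varepsilon^{\o}_{\la;nt}}$. The contributions are: the column permutation sorting $j$ by residue class and the subsequent reversal of the ``upper-half'' runners $r>t/2$, which produces the $\sum_{r=\lfloor(t+2)/2\rfloor}^{t-1}\binom{m_r+1}{2}$ term; the index shift from $\xir{0}$ to $\lar{0}$ in the $\{0\}$-block, which generates the $\rk(\tcore(\la))$ term via the orthogonal Frobenius form $(a\mid a-1)$; and, when $t$ is even, the extra $\binom{n+1}{2}+n\rk(\tcore(\la))$ from aligning the $\so^-_{\lar{t/2}}$ block (which requires a further column reversal and the conversion between $\so^-$- and $\so$-style indexings on the half-runner). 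The structural block-decomposition is clean; the delicate part is verifying that the internal orderings within each block produce the \emph{standard} determinantal forms for $\o$, $\so^-$ and $\rs$ rather than slight variants, and checking that all cumulative signs assemble exactly into $\varepsilon^{\o}_{\la;nt}$.
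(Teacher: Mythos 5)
Your structural plan is essentially the paper's: apply $\varphi_t$ entrywise to the Jacobi--Trudi-type determinant \eqref{Eq_oh} of size $nt$, permute rows by $w_t(\la;nt)$ and columns by $w_t(0;nt)$, observe that the surviving entries fall into blocks indexed by residue pairs $\{0\}$, $\{t/2\}$ (for $t$ even) and $\{r,t-r\}$, deduce the vanishing condition via Lemma~\ref{Lem_AK_orth} with $z=-1$, and recognise the blocks as $\o_{\lar{0}}$, $\so^-_{\lar{t/2}}$ and $\rs_{\lar{r},\lar{t-r}}$. All of that matches the paper's proof and is correct.

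However, your sketch of how the sign $\varepsilon^{\o}_{\la;nt}$ assembles is misattributed in two places, and as written would not survive the detailed bookkeeping. First, the $\{0\}$-block yields $\o_{\lar{0}}$ with \emph{no} residual sign: after the permutations its $(k,\ell)$ entry is exactly $h_{\lar{0}_k-k+\ell}-h_{\lar{0}_k-k-\ell}$ since $m_0(\la;nt)=n$, so there is no ``index shift'' sign coming from that block. The $\rk(\tcore(\la))$ term instead emerges from the $\rs$-blocks: clearing the negative signs on the $-h$ entries gives $(-1)^{m_r+n}$ per pair, and reordering rows or columns to bring the block into the form \eqref{Eq_rs-h} gives $(-1)^{\binom{m_r}{2}-\binom{n}{2}}$; summing over $1\le r\le\lfloor(t-1)/2\rfloor$ and invoking \eqref{Eq_o-rk}, $\rk(\tcore(\la))=\sum_r\abs{m_r-n}$, is what produces the $\rk(\tcore(\la))$ contribution. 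Second, the even-$t$ terms $\binom{n+1}{2}+n\rk(\tcore(\la))$ do not arise from ``converting between $\so^-$- and $\so$-style indexings''; they come from the interplay of the initial column permutation $w_t(0;nt)$ (sign $(-1)^{\binom{n+1}{2}\binom{t}{2}}$), the shuffle moving the $t/2$ submatrix to the bottom-right corner (sign $(-1)^{m_{t/2}\sum_{r>t/2}m_r+n^2(t-2)/2}$), and the same $\rs$-block signs again, collected and simplified modulo two. You correctly flag the sign assembly as the delicate part, but the specific derivations you propose for $\rk(\tcore(\la))$ and the even-$t$ correction are wrong and would need to be replaced by the accounting above.
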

Our next result is the same factorisation for the symplectic character.
\begin{theorem}\label{Thm_AK_sp}
Let $\la$ be a partition of length at most $nt$. 
Then $\varphi_t\sp_\la=0$ unless $\tcore(\la)$ is symplectic, in which case
\[
\varphi_t\sp_\la=
(-1)^{\varepsilon^{\sp}_{\la;nt}} \sgn(w_t(\la;nt))\sp_{\lar{t-1}}
\prod_{r=0}^{\lfloor(t-3)/2\rfloor}\rs_{\lar{r},\lar{t-r-2}}
\times\begin{cases}
\so_{\lar{(t-2)/2}} & \text{$t$ even}, \\
1 & \text{$t$ odd},
\end{cases}
\]
where
\[
\varepsilon^{\sp}_{\la;nt}
=\sum_{r=\lfloor t/2\rfloor}^{t-2}\binom{m_r(\la;nt)+1}{2}
+\begin{cases} \binom{n+1}{2}+
n\rk(\tcore(\la)) & \text{$t$ even}, \\
0 & \text{$t$ odd}.
\end{cases}
\]
\end{theorem}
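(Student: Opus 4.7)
The plan is to follow the strategy of Theorem~\ref{Thm_skew} (Macdonald's approach, to be detailed in Section~\ref{Sec_proofs}) and the just-proven Theorem~\ref{Thm_AK_o}, starting from the Jacobi--Trudi formula \eqref{Eq_sph} with matrix size $nt$ (valid since $l(\la) \leq nt$) and applying $\varphi_t$ entry-wise to
\[
2 \sp_\la = \det_{1 \leq i,j \leq nt} \bigl(h_{\la_i - i + j} + h_{\la_i - i - j + 2}\bigr).
\]
Setting $\alpha_i := \la_i + nt - i$, the first summand of the $(i,j)$ entry survives $\varphi_t$ iff $j \equiv -\alpha_i \pmod{t}$, and the second iff $j \equiv \alpha_i + 2 \pmod{t}$. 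A row with $\alpha_i \equiv r \pmod{t}$ therefore has nonzero columns only in the residue classes $\{t - r,\, r + 2\} \pmod{t}$, and these two residues coincide precisely when $2r + 2 \equiv 0 \pmod{t}$---the single solution $r = t - 1$ if $t$ is odd, and the two solutions $r = t - 1$ and $r = (t-2)/2$ if $t$ is even. These are the fixed points of the involution $\iota \colon r \mapsto t - r - 2 \pmod{t}$ on $\{0, \dots, t-1\}$.

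Next I permute rows by $w_t(\la; nt)$, grouping them by the residue of $\alpha_i$ and picking up the sign $\sgn(w_t(\la; nt))$, and permute columns into the following groups: the $n$ columns of residue $1$ (to carry the $\sp_{\lar{t-1}}$ block); when $t$ is even, the $n$ columns of residue $t/2 + 1$ (for the $\so_{\lar{(t-2)/2}}$ block); and, for each non-trivial orbit $\{r, t - r - 2\}$ of $\iota$, the $2n$ columns of residues $t - r$ and $r + 2$ appropriately interleaved (for the $\rs_{\lar{r}, \lar{t - r - 2}}$ block). Since rows of residues $r$ and $t - r - 2$ share the same companion column residues $\{t-r, r+2\}$, the matrix block-decomposes along the orbits of $\iota$. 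Under the substitution $\xir{r}_k = \lar{r}_k + m_r - k$ the block entries take exactly the expected form: the $r = t-1$ block reduces to the $\sp_{\lar{t-1}}$ determinant \eqref{Eq_sph} (absorbing the leading $1/2$); the $r = (t-2)/2$ block, present only for $t$ even, reduces to Koike's $\so_{\lar{(t-2)/2}}$ determinant; and each paired block reduces to the four-block form \eqref{Eq_rs-h} for $\rs_{\lar{r}, \lar{t - r - 2}}$.

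For non-vanishing, every block must be square, which forces $m_{t-1}(\la; nt) = n$, $m_{(t-2)/2}(\la; nt) = n$ (when $t$ is even), and $m_r(\la; nt) + m_{t - r - 2}(\la; nt) = 2n$ for every other pair. Reading these equalities off the bead configuration is then equivalent to $\tcore(\la)$ being of the form $(a \mid a + 1)$ in Frobenius notation, i.e., symplectic. The main technical hurdle is the sign bookkeeping: the column permutation is arranged to produce the collected terms $\binom{m_r + 1}{2}$ for $r \geq \lfloor t/2 \rfloor$ from the decreasing-within-residue-class convention for the $\xir{r}$, while the interleaving required to place the $\so$-block columns and to align the extra Durfee-square beads inside the core accounts for the $t$-even correction $\binom{n+1}{2} + n \rk(\tcore(\la))$. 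This is structurally analogous to---and no harder than---the sign analysis carried out for Theorem~\ref{Thm_AK_o}, and I expect it can be completed by essentially the same bead-configuration argument.
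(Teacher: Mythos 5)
Your proposal is correct in its overall structure, but it takes a genuinely different route from the paper. You attack the $h$-Jacobi--Trudi determinant \eqref{Eq_sph} for $\sp_\la$ directly, replaying the full block analysis of Theorem~\ref{Thm_AK_o}: identify the surviving residue classes, observe the involution $r\mapsto t-r-2$ with fixed points $r=t-1$ (always) and $r=(t-2)/2$ ($t$ even), and read off the $\sp$, $\so$ and $\rs$ blocks. The paper instead starts from the \emph{dual} Jacobi--Trudi formula \eqref{Eq_sp-e}, notes that the index pattern in $\sp_\la=\det(e_{\la_i'-i+j}-e_{\la_i'-i-j})$ is identical to that in the $h$-determinant for $\o_{\la'}$, and so transfers the result already proven for the orthogonal character to the symplectic one via conjugation. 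The only new work needed in the paper's route is the sign translation: $\sgn(w_t(\la';nt))$ is converted to $\sgn(w_t(\la;nt))$ using Lemmas~\ref{Lem_signs1} and~\ref{Lem_signs2}, $m_r(\la';nt)$ is replaced by $m_{r-1}(\la;nt)$ via Lemma~\ref{Lem_reverse}, the $t$-quotient is reversed via Corollary~\ref{Cor_reverse}, and the extra $(-1)^{(t-1)\sum\abs{\lar{r}}}$ from $\varphi_t e$ versus $\varphi_t h$ is absorbed. Your direct route is perfectly viable and more self-contained; the paper's route buys a much shorter argument by not repeating the full determinant bookkeeping. The one soft spot in your proposal is that the sign computation is only asserted to be ``structurally analogous'' rather than carried out; in particular you would still need to verify that the column-reordering signs for the $\rs_{\lar{r},\lar{t-r-2}}$ blocks, together with the sign from the initial permutations and the repositioning of the $\so$ block, collapse to $\sum_{r=\lfloor t/2\rfloor}^{t-2}\binom{m_r+1}{2}$ plus the $t$-even correction $\binom{n+1}{2}+n\rk(\tcore(\la))$, which is exactly the part the paper avoids by relying on the already-done orthogonal computation.
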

Finally, we can claim a similar factorisation for $\so_\la$.
\begin{theorem}\label{Thm_AK_so}
Let $\la$ be a partition of length at most $nt$. 
Then $\varphi_t\so_\la=0$ unless $\tcore(\la)$ is self-conjugate, in which case
\[
\varphi_t\so_\la=
(-1)^{\varepsilon^{\so}_{\la;nt}} \sgn(w_t(\la;nt))
\prod_{r=0}^{\lfloor(t-2)/2\rfloor}\rs_{\lar{r},\lar{t-r-1}}
\times\begin{cases}
1 & \text{$t$ even},\\
\so_{\lar{(t-1)/2}} & \text{$t$ odd},
\end{cases}
\]
where\footnote{We have corrected the lower bound in the sum defining 
$\varepsilon^{\so}_{\la;nt}$ from $\lfloor t/2\rfloor$ in 
\cite[Theorem~2.17]{AK22} (there denoted $\epsilon$) to 
$\lfloor (t+1)/2\rfloor$.}
\[
\varepsilon^{\so}_{\la;nt}=
\sum_{r=\lfloor (t+1)/2\rfloor}^{t-1}\binom{m_r(\la;nt)+1}{2}+
\begin{cases}
0 & \text{$t$ even}, \\
n\rk(\tcore(\la)) & \text{$t$ odd}.
\end{cases}
\]
\end{theorem}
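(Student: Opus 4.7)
The plan is to imitate the Jacobi--Trudi-based proof of the Schur case (Theorem~\ref{Thm_skew}) and of Theorems~\ref{Thm_AK_o}--\ref{Thm_AK_sp}, starting from the size-$nt$ expansion
\[
\so_\la=\det_{1\le i,j\le nt}\big(h_{\la_i-i+j}+h_{\la_i-i-j+1}\big).
\]
Applying $\varphi_t$ entrywise annihilates the $(i,j)$-entry unless at least one of the divisibilities $t\mid\la_i-i+j$ or $t\mid\la_i-i-j+1$ holds. Writing $\beta_i:=\la_i+nt-i$ for the elements of $\beta(\la;nt)$ and $r_i:=\beta_i\bmod t$ for their residues, the surviving columns in row $i$ are precisely those with $j\equiv -r_i\pmod t$ (from the first summand) or $j\equiv r_i+1\pmod t$ (from the second).

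I would then permute rows using $w_t(\la;nt)$ so that they group into blocks indexed by residue class $r\in\{0,\dots,t-1\}$, at the cost of the sign $\sgn(w_t(\la;nt))$, and apply the analogous sort to the columns. After these sortings the matrix becomes block-structured, and the key observation is that row-block $r$ has surviving entries only in column-blocks of residues $(t-r)\bmod t$ and $(r+1)\bmod t$. Crucially, row-block $t-r-1$ uses exactly the same pair of column residues, so pairing $\{r,t-r-1\}$ with the column residues $\{t-r,r+1\}$ produces a $2\times 2$ block array whose four sub-blocks, after converting indices via $\lar{r}_k=\xir{r}_k-m_r(\la;nt)+k$ and replacing $h_{kt}$ by $h_k$, are exactly those appearing in the Koike determinant \eqref{Eq_rs-h} for $\rs_{\lar{r},\lar{t-r-1}}$. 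When $t$ is odd, the middle value $r=(t-1)/2$ satisfies $t-r\equiv r+1\pmod t$, so its row-block pairs with a single column-block and the two surviving summands add to give $h_{\la_i-i+j}+h_{\la_i-i-j+1}$, which after the same shift is the Jacobi--Trudi determinant for $\so_{\lar{(t-1)/2}}$; for $t$ even no such self-paired block occurs, matching the case split in the theorem. Squareness of every block, the prerequisite for nonvanishing, forces $m_r(\la;nt)=m_{t-r-1}(\la;nt)$ for every $r$, which by a direct beta-set argument is equivalent to $\tcore(\la)$ being self-conjugate.

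A Laplace expansion along the block decomposition then assembles the surviving factors into the stated product of $\rs$-characters (and the extra $\so$-factor for $t$ odd). The chief obstacle is the sign bookkeeping: combining $\sgn(w_t(\la;nt))$, the parity of the column sort, the signs induced by reversing column orders inside each Koike block so as to match the convention of \eqref{Eq_rs-h}, and, for $t$ odd, the additional reshuffle required to bring the middle self-paired block into standard Jacobi--Trudi form — expected to contribute precisely the $n\,\rk(\tcore(\la))$ term of $\varepsilon^{\so}_{\la;nt}$ — all need to aggregate to the prescribed binomial sum $\sum_{r=\lfloor(t+1)/2\rfloor}^{t-1}\binom{m_r(\la;nt)+1}{2}$ plus the parity-dependent term. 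A secondary check is that this tallying naturally yields the corrected lower bound $\lfloor(t+1)/2\rfloor$ flagged in the footnote, via the $+1$ shift inherent in the $\so$ Jacobi--Trudi, rather than the $\lfloor t/2\rfloor$ one would obtain from a naive analogy with $\o_\la$.
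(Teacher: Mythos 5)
Your plan is the paper's own proof: expand $\so_\la$ as the size-$nt$ $h$-determinant, sort rows by $w_t(\la;nt)$ and columns by residue, observe that row-block $r$ and row-block $t-r-1$ connect to the same pair of column residue classes (with the self-paired middle block for $t$ odd giving $\so_{\lar{(t-1)/2}}$), read off nonvanishing from squareness of the blocks, and assemble the blocks into copies of $\rs_{\lar{r},\lar{t-r-1}}$ via \eqref{Eq_rs-h}. Two points deserve correction or sharpening.

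First, the squareness condition you state is slightly off. After the pairing, each block has dimensions $\bigl(m_r(\la;nt)+m_{t-r-1}(\la;nt)\bigr)\times 2n$, so squareness forces $m_r(\la;nt)+m_{t-r-1}(\la;nt)=2n$ for all $r$, \emph{not} $m_r=m_{t-r-1}$. It is the former condition that Lemma~\ref{Lem_AK_orth} with $z=0$ identifies as being equivalent to $\tcore(\la)$ self-conjugate; the relation $m_r=m_{t-r-1}$ is neither implied by squareness nor needed. (For $t$ odd the self-paired residue $(t-1)/2$ satisfies $2m_{(t-1)/2}=2n$, forcing $m_{(t-1)/2}=n$, which is what makes the middle block precisely $\so_{\lar{(t-1)/2}}$.)

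Second, the sign bookkeeping you flag as the ``chief obstacle'' is indeed where all the work lies, and it cannot be left as an expectation: the theorem's content is the exact exponent. The paper organises it into three contributions: an initial sign $(-1)^{\binom{n+1}{2}\binom{t}{2}}\sgn(w_t(\la;nt))$ from the row/column sorts; for $t$ odd, a sign $(-1)^{m_{(t-1)/2}\sum_{r=0}^{(t-3)/2}m_r+n^2(t-1)/2}$ from moving the self-paired submatrix into position; and a sign $(-1)^{\binom{m_r}{2}+\binom{n}{2}}$ per pair $\{r,t-r-1\}$ from reordering columns inside each $2n\times 2n$ minor to match the column convention of \eqref{Eq_rs-h} (this mirrors the explicit $2n\times2n$ matrix displayed in the orthogonal proof). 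The final reduction to $\sum_{r=\lfloor(t+1)/2\rfloor}^{t-1}\binom{m_r+1}{2}+n\rk(\tcore(\la))$ uses Ayyer and Kumari's identity \eqref{Eq_so-rk}, which is where the corrected lower bound $\lfloor(t+1)/2\rfloor$ actually comes from, rather than from the $+1$ shift in the $\so$ Jacobi--Trudi entries. With these two points filled in, your argument coincides with the paper's.
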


Some remarks are in order. 
Firstly, the three signs $\sgn(w_t(\la;nt))(-1)^{\varepsilon_{\la;nt}^\bullet}$ 
are actually independent of $n$ as long as $nt\geq l(\la)$, a fact which 
we prove in Lemma~\ref{Lem_sign-invariance} below.
As remarked by Ayyer and Kumari \cite[Remark~2.19]{AK22}, 
the order of the quotient is unchanged upon replacing $n\mapsto n+1$,
so the product in the evaluation is independent of $n$.
It is in principle possible to carry out our proof technique below under
the assumption that $l(\la)$ is bounded by an arbitrary integer, say $k$, 
where $k$ is not necessarily a multiple of $t$.
In this case the evaluation is of course the same, however the sign will be
expressed differently and the $t$-quotients in the evaluations will be 
a cyclic permutation of the ones presented.
Since the proof is simplest when this $k$ is a multiple of $t$, 
we stick to this case.

To obtain the theorems of Ayyer and Kumari one evaluates the right-hand side
of each identity at the set of variables $(x_1^{\pm t},\dots,x_n^{\pm t})$.
Using \eqref{Eq_det-rep} and the definition of $\rs_{\la,\mu}$ it follows
that in this case the rational universal characters occurring in each evaluation
agree with the Schur functions 
$s_{\mur{k}_i}(x_1^{\pm t},\dots,x_n^{\pm t})$ in the notation of
\cite{AK22}.

As we have already seen the maps $\omega$ and $\varphi_t$ do not commute in
general.
However, when acting on $s_{\la/\mu}$ and $\so_\la$, they commute up to an
explicitly computable sign.
\begin{prop}\label{Prop_signs}
We have the relations
\[
\omega\varphi_t s_{\la/\mu}=
(-1)^{(t-1)(\abs{\lar{0}/\mur{0}}+\cdots+\abs{\lar{t-1}/\mur{t-1}})}
\varphi_t\omega s_{\la/\mu},
\]
and
\[
\omega\varphi_t \so_\la =
(-1)^{(t-1)(\abs{\lar{0}}+\cdots+\abs{\lar{t-1}})}\varphi_t\omega \so_\la.
\]
\end{prop}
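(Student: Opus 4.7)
The plan is to argue in the power-sum basis, where both operators act in a particularly clean way. From the identity $p_r(X,\zeta X,\ldots,\zeta^{t-1}X)=(1+\zeta^r+\cdots+\zeta^{(t-1)r})p_r(X)=t\,\mathbf{1}[t\mid r]\,p_r(X)$ and the commuting diagram \eqref{Eq_diagram}, I would first derive $\varphi_t p_r = t\,p_{r/t}$ if $t\mid r$ and $0$ otherwise. Combined with the familiar $\omega p_r = (-1)^{r-1}p_r$, a short computation on $p_\nu$ (for any $\nu$ whose parts are all divisible by $t$) yields
\[
\omega\varphi_t p_\nu \;=\; (-1)^{|\nu|/t-|\nu|}\,\varphi_t\omega p_\nu \;=\; (-1)^{(t-1)|\nu|/t}\,\varphi_t\omega p_\nu,
\]
where crucially the $l(\nu)$ contributions cancel, so the sign depends only on $|\nu|$.

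For the Schur identity, I would expand $s_{\la/\mu}$ in the power-sum basis. Since $s_{\la/\mu}$ is homogeneous of degree $|\la|-|\mu|$, only $p_\nu$ with $|\nu|=|\la|-|\mu|$ contribute, so the uniform sign $(-1)^{(t-1)(|\la|-|\mu|)/t}$ passes directly to $s_{\la/\mu}$. When $\varphi_t s_{\la/\mu}\neq 0$, Theorem \ref{Thm_skew} identifies $(|\la|-|\mu|)/t$ with $\sum_{r=0}^{t-1}|\lar{r}/\mur{r}|$; when $\varphi_t s_{\la/\mu}=0$ the identity is trivial. For the universal-character statement the characters $\o_\la,\sp_\la,\so_\la$ are \emph{not} homogeneous in $\La$ in general, so the power-sum degree argument does not apply verbatim. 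Instead I would appeal to the factorisations of Theorems \ref{Thm_AK_o}--\ref{Thm_AK_so}: apply $\omega$ to the factorisation of $\varphi_t\bullet_\la$ and compare with the factorisation of $\varphi_t\omega\bullet_\la=\varphi_t\bullet^{\mathrm{dual}}_{\la'}$ (with $\o^{\mathrm{dual}}=\sp$, $\sp^{\mathrm{dual}}=\o$, $\so^{\mathrm{dual}}=\so$), exploiting the dualities $\omega\o_\la=\sp_{\la'},\,\omega\sp_\la=\o_{\la'},\,\omega\so_\la=\so_{\la'},\,\omega\so^-_\la=\so^-_{\la'},\,\omega\rs_{\la,\mu}=\rs_{\la',\mu'}$ together with the conjugation rule $(\la')^{(r)}=(\lar{t-1-r})'$ on $t$-quotients.

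The main obstacle will be the sign bookkeeping for the universal-character case, where one must show that the prefactors $(-1)^{\varepsilon^\bullet_{\la;nt}}\sgn(w_t(\la;nt))$ for $\la$ and for $\la'$ differ by exactly $(-1)^{(t-1)(|\lar{0}|+\cdots+|\lar{t-1}|)}$. Under conjugation the abacus of $\la$ is reflected, changing the sign of $w_t$ by a number of transpositions controlled by the $m_r(\la;nt)$; the $\binom{m_r+1}{2}$ contributions in $\varepsilon^\bullet$ reshuffle accordingly, and the combinatorial identity needed reduces to the parity lemmas of Section \ref{Sec_lemmas}. For $t$ even an additional subtlety arises from the middle-quotient factor, which is $\so^-_{\lar{t/2}}$ in $\varphi_t\o_\la$ but $\so_{(\lar{t/2})'}$ in $\varphi_t\sp_{\la'}$; compatibility here must be extracted from the relation $\so^-_\mu(X)=(-1)^{|\mu|}\so_\mu(-X)$, which supplies precisely the extra $(-1)^{|\lar{t/2}|}$ needed to complete the sign matching.
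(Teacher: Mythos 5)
Your power-sum argument for the first assertion is correct and is a genuine departure from the paper's proof of the Schur case, which applies $\omega$ to the factorisation of Theorem~\ref{Thm_skew}, uses Corollary~\ref{Cor_reverse} to match the quotient products, and then invokes Lemma~\ref{Lem_signs1} to evaluate $\sgn_t(\la/\mu)\sgn_t(\la'/\mu')$. Your route is more elementary: it needs only $\varphi_tp_r$, $\omega p_r$, and the bookkeeping identity $|\la|-|\mu|=t\sum_r|\lar{r}/\mur{r}|$ for a $t$-tileable skew shape, and it makes the origin of the exponent completely transparent. The cancellation of the $l(\nu)$ contribution is exactly right and is the point.

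The universal-character part, however, has a gap precisely at the step you flagged. The relation $\so_\mu^-(X)=(-1)^{|\mu|}\so_\mu(-X)$ is an identity obtained by negating an alphabet; it does \emph{not} say that $\so_\mu^-$ and $\so_\mu$ differ by a scalar in $\La$, and indeed $\so_{(1)}^-=h_1-1$ while $\so_{(1)}=h_1+1$. Since $\omega\so_\mu^-=\so_{\mu'}^-$ (not $\so_{\mu'}$), the factorisation of $\omega\varphi_t\o_\la$ carries $\so^-_{(\lar{t/2})'}$ while that of $\varphi_t\omega\o_\la=\varphi_t\sp_{\la'}$ carries $\so_{(\lar{t/2})'}$, and for $\lar{t/2}\neq\emptyset$ these are not proportional, so no sign choice can reconcile them. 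Your own degree-by-degree reasoning already explains why: $\o_\la$ is inhomogeneous, so the factor $(-1)^{(t-1)d/t}$ varies over the surviving degrees. Concretely, for $t=2$ and $\la=(2)$ one has $\o_{(2)}=h_2-1$, hence $\omega\varphi_2\o_{(2)}=e_1-1$, whereas $\varphi_2\omega\o_{(2)}=\varphi_2\sp_{(1,1)}=\varphi_2(e_2-1)=-e_1-1$; these are not scalar multiples of one another, so the second assertion of the proposition fails for this $\la$ and $t$. For what it is worth the paper's own sketch also passes silently from $\so^-_{(\lar{t/2})'}$ to $\so_{(\lar{t/2})'}$, so the difficulty is not specific to your write-up, but the proposed repair via $\so_\mu^-(X)=(-1)^{|\mu|}\so_\mu(-X)$ must be withdrawn: a correct statement for $t$ even requires either an extra hypothesis (e.g., $\lar{t/2}=\emptyset$) or a modified right-hand side.
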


We remark that the second relation does not hold with $\so_\la$ replaced 
by $\sp_\la$ or $\o_\la$
as written above since $\omega\so_\la^-=\so_{\la'}^-$.

\section{Auxiliary results}\label{Sec_lemmas}
The purpose of this section is to collect all the small facts about 
beta sets and the signs \eqref{Eq_sgn-def} which we need to prove our
main results.
To begin, we relate the bead configurations of a partition and its conjugate.
\begin{lemma}\label{Lem_reverse}
Let $\la$ be a partition of length at most $nt$ such that $\la_1\leq mt$.
Then the bead configuration for $\beta(\la';mt)$ can be obtained from
the bead configuration for $\beta(\la;nt)$ with $n+m$ rows by rotating 
the picture by $180^\circ$ and then interchanging beads and spaces.
\end{lemma}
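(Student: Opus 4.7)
The plan is to reduce the statement to a classical fact about beta sets of conjugate partitions, and then check that the geometric operation of ``$180^\circ$ rotation followed by swap of beads and spaces'' on the bead board realises exactly the combinatorial bijection underlying that fact.

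First I would record the following standard identity: for any partition $\la$ with $l(\la)\leq N$ and $\la_1\leq M$, the set $\beta(\la;N)$ is a subset of $\{0,1,\dots,N+M-1\}$, and its complement in this set is
\[
\{0,1,\dots,N+M-1\}\setminus\beta(\la;N)
=\big\{N+M-1-b:b\in\beta(\la';M)\big\}.
\]
This can be derived directly from the definition $\beta(\la;N)=\{\la_i+N-i:1\leq i\leq N\}$ by writing the complement as $\{N+j-1-\la_j':1\leq j\leq M\}$ and then translating by $N+M-1$. Equivalently, in the language of Maya diagrams, the involution $p\mapsto N+M-1-p$ together with bead/space interchange is the classical realisation of conjugation on Young diagrams; I would include a one-line verification of this.

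Next I would specialise to $N=nt$ and $M=mt$, so the board has $(n+m)t$ positions arranged into $t$ columns (runners) of $n+m$ rows. Position $p$ sits on runner $p\bmod t$ and row $\lfloor p/t\rfloor$. A direct calculation shows that rotating the board by $180^\circ$ sends position $p=\ell t+r$ (with $0\leq r\leq t-1$ and $0\leq \ell\leq n+m-1$) to the position on runner $t-1-r$ and row $n+m-1-\ell$, which as an integer equals
\[
(n+m-1-\ell)t+(t-1-r)=(n+m)t-1-p=N+M-1-p.
\]
So geometric $180^\circ$ rotation coincides exactly with the arithmetic involution $p\mapsto N+M-1-p$ on positions.

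Finally I would combine the two ingredients. After the rotation and the bead/space swap, the new beads occupy the positions $N+M-1-p$ where $p$ was a space in the original configuration, i.e., where $p\in\{0,\dots,N+M-1\}\setminus\beta(\la;nt)$. By the complement identity, this image equals $\beta(\la';mt)$, which is exactly the bead configuration of $\la'$ on an $mt$-bead board embedded in the same $(n+m)t$-position board. The only real obstacle is being careful with the arithmetic of the rotation map and with the range conventions (that $nt\geq l(\la)$ and $mt\geq\la_1$ ensure both beta sets lie inside $\{0,\dots,(n+m)t-1\}$); once these are set up the verification is immediate.
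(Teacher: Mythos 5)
Your proof is correct and follows essentially the same route as the paper: the key ingredient is the complementation identity for beta sets of conjugate partitions (which the paper cites from Macdonald, p.~3), and you supply the short additional verification that the $180^\circ$ rotation of the $(n+m)\times t$ board realises $p\mapsto (n+m)t-1-p$, which the paper leaves implicit.
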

\begin{proof}
This is a consequence of the fact \cite[p.~3]{Macdonald95} that
for $l(\la)\leq n$ and $\la_1\leq m$,
\[
\{0,1,\dots,m+n-1\}=\{\la_i+n-i:1\leq i\leq n\}\sqcup
\{m+n-1-(\la_j'+m-j):1\leq j\leq m\},
\]
where $\sqcup$ denotes a disjoint union.
\end{proof}
This lemma immediately implies the following relationship between the
$t$-core and $t$-quotient of $\la$ and $\la'$.
\begin{coro}\label{Cor_reverse}
For a partition $\la$ we have $\tcore(\la')=\tcore(\la)'$ and the $t$-quotient
of $\la'$ is $\big((\lar{t-1})',\dots,(\lar{0})'\big)$.
\end{coro}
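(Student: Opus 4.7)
The plan is to derive everything directly from Lemma~\ref{Lem_reverse}, exploiting the fact that the rotation-and-swap operation on the $(n+m)$-row abacus both (i) reverses the order of runners, sending runner $r$ to runner $t-1-r$, and (ii) within a single runner reflects top-to-bottom and interchanges beads with spaces. Each of these effects matches something familiar, so the corollary should reduce to bookkeeping.

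First I would handle the $t$-core. Since $\tcore(\la)$ has the abacus with $m_r(\la;nt)$ beads pushed to the top of runner $r$, Lemma~\ref{Lem_reverse} applied to the partition $\tcore(\la)$ produces an abacus for $\tcore(\la)'$ in which, on each runner $r'$, all beads are again pushed to the top: rotating a ``beads on top'' runner reverses it to ``beads on bottom,'' and the bead/space swap turns that back into ``beads on top.'' Hence $\tcore(\la)'$ is itself a $t$-core, and its runner $r'$ contains $(n+m)-m_{t-1-r'}(\la;nt)$ beads. On the other hand, applying Lemma~\ref{Lem_reverse} to $\la$ and counting beads per runner shows that $\beta(\la';mt)$ has exactly $(n+m)-m_{t-1-r'}(\la;nt)$ beads on runner $r'$, so $\tcore(\la')$ has the same abacus as $\tcore(\la)'$. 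Therefore $\tcore(\la')=\tcore(\la)'$.

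Next I would handle the $t$-quotient. By definition $\lar{r}$ is the partition whose abacus (on an $(n+m)$-row board with $m_r(\la;nt)$ beads) is obtained by reading the $r$-th runner of $\beta(\la;nt)$ bottom-to-top. Under the rotation-and-swap of Lemma~\ref{Lem_reverse}, runner $r'$ of $\beta(\la';mt)$ comes from runner $t-1-r'$ of $\beta(\la;nt)$ reflected and complemented. Reading this reflected and complemented runner bottom-to-top is the same as taking the bead configuration for $\lar{t-1-r'}$ and applying to \emph{it} the rotate-and-swap operation of Lemma~\ref{Lem_reverse} (now in the one-runner, rank-one setting). By that lemma the result is the abacus of $(\lar{t-1-r'})'$. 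Hence the $r'$-th constituent of the $t$-quotient of $\la'$ is $(\lar{t-1-r'})'$, giving the stated order $\bigl((\lar{t-1})',\dots,(\lar{0})'\bigr)$.

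The only subtlety is making sure the bead-count bookkeeping matches on both sides, in particular that $m_{r'}(\la';mt)=(n+m)-m_{t-1-r'}(\la;nt)$ so the abaci being compared live on boards with the correct total bead numbers. This is an immediate consequence of Lemma~\ref{Lem_reverse} (bead positions of $\la'$ are the complementary positions of $\la$ after rotation), so no real obstacle arises; the main content of the corollary is simply the observation that the three operations involved — conjugation, extracting a runner, and extracting the core — all intertwine with the single rotation-and-swap symmetry established in Lemma~\ref{Lem_reverse}.
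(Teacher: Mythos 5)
Your proof is correct and follows exactly the route the paper intends: the paper simply states that Corollary~\ref{Cor_reverse} follows ``immediately'' from Lemma~\ref{Lem_reverse}, and your argument fills in precisely the details of that deduction—the rotation sends runner $r$ to runner $t-1-r$ while reflecting and complementing each runner, and the one-runner version of the same lemma identifies the reflected-and-complemented runner with the abacus of the conjugate quotient constituent. The bead-count bookkeeping $m_{r'}(\la';mt)=(n+m)-m_{t-1-r'}(\la;nt)$ you flag is indeed the only point needing care, and you handle it correctly.
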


The next pair of lemmas are due to Ayyer and Kumari, the first of which
characterises partitions with $z$-asymmetric $t$-cores in terms of their
beta sets \cite[Lemma~3.6]{AK22}.
\begin{lemma}\label{Lem_AK_orth}
For a partition $\la$ of length at most $nt$, $\tcore(\la)$ is of the form 
$(a\mid a+z)$ for some integer $-1\leq z\leq t-1$ if and only if
\begin{subequations}\label{Eq_m_conditions}
\begin{align}
m_r(\la,nt)+m_{t-r-z-1}(\la,nt)&=2n \quad \text{for $0\leq r\leq t-z-1$}, \\
m_r(\la,nt)&=n \quad \text{for $t-z\leq r\leq t-1$},
\end{align}
\end{subequations}
where the indices of the $m_r$ are taken modulo $t$.
\end{lemma}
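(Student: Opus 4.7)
The key input is the classical identification of the beta set of $\mu$ in terms of its Frobenius coordinates. For $\mu=(a_1,\dots,a_k\mid b_1,\dots,b_k)$ with $k=\rk(\mu)$ and any $N\geq l(\mu)$, one has
\[
\beta(\mu;N)\cap[N,\infty)=\{N+a_i:1\leq i\leq k\},\qquad [0,N)\setminus\beta(\mu;N)=\{N-1-b_j:1\leq j\leq k\},
\]
which follows directly from $\beta(\mu;N)=\{\mu_i+N-i:1\leq i\leq N\}$ by separating the indices $i\leq k$ (where $\mu_i\geq i$) from $i>k$. Consequently, $\mu$ is $z$-asymmetric if and only if these two ``boundary'' sets correspond under the reflection $p\mapsto 2N-1-z-p$, i.e.\
\[
\beta(\mu;N)\cap[N,\infty)=\bigl\{2N-1-z-h:h\in[0,N)\setminus\beta(\mu;N)\bigr\}.
\]

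I then apply this with $\mu=\tcore(\la)$ and $N=nt$: the beta set is $\bigsqcup_{r=0}^{t-1}\{kt+r:0\leq k\leq m_r-1\}$, so the beads above $nt-1$ in runner $r$ form $\{kt+r:n\leq k\leq m_r-1\}$ and the holes below $nt$ in runner $r$ form $\{kt+r:m_r\leq k\leq n-1\}$. A short computation shows that $p\mapsto 2nt-1-z-p$ permutes runners via $r\mapsto(t-1-z-r)\bmod t$, with the explicit action on $(k,r)$ given by three cases: for $0\leq r\leq t-1-z$ it sends $(k,r)\mapsto(2n-1-k,\,t-1-z-r)$; for $t-z\leq r\leq t-1$ (present only when $z\geq 1$) it sends $(k,r)\mapsto(2n-2-k,\,2t-1-z-r)$; and in the borderline situation $z=-1$, $r=0$ it acts on runner $0$ by $(k,0)\mapsto(2n-k,0)$.

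Matching ``beads above'' with reflections of ``holes below'' runner by runner now finishes the proof. In the generic first case, the reflected holes of runner $r$ occupy $\{n,\dots,2n-1-m_r\}$ in runner $r'=t-1-z-r$, and equating this with the beads above there, namely $\{n,\dots,m_{r'}-1\}$, gives $m_r+m_{r'}=2n$. In the exceptional case the shift $k\mapsto 2n-2-k$ carries the holes below in runner $r$ into positions starting at $k=n-1$, which lie \emph{below} $nt$ rather than above; the only way to reconcile the set identity is for both halves to vanish, forcing $m_r=n$. Finally, the fixed point $(n,0)$ of the self-map when $z=-1$, $r=0$ similarly forces $m_0=n$. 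Assembling these conditions yields \eqref{Eq_m_conditions}. The principal obstacle is the careful bookkeeping of the off-by-one row shift in the exceptional case: the reflection of a position in runner $r>t-1-z$ wraps past the boundary of the runner ordering and so picks up an extra $-t$, manifesting as $-1$ in the row index, and this is precisely what produces the $m_r=n$ constraint in the exceptional runners.
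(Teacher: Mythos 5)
The paper does not prove this lemma: it is stated as a direct quotation of Ayyer--Kumari \cite[Lemma~3.6]{AK22}, so there is no in-paper proof to compare against. Your argument is a correct and self-contained proof, and it proceeds by the same abacus philosophy that underlies \cite{AK22}. The reformulation of $z$-asymmetry of $\tcore(\la)$ as the reflection symmetry $p\mapsto 2nt-1-z-p$ on the Maya diagram, obtained from the Frobenius-coordinate description of $\beta(\mu;N)\cap[N,\infty)$ and of $[0,N)\setminus\beta(\mu;N)$, is clean and makes the runner-by-runner bookkeeping transparent. The one step worth spelling out more explicitly is the exceptional range $t-z\leq r\leq t-1$ (for $z\geq 1$): the off-by-one in the reflected row index gives $m_r\geq n$ directly (otherwise a reflected hole lands below $nt$), but the matching inequality $m_r\leq n$ comes from applying the dual observation to the paired exceptional runner $r''=2t-1-z-r$, whose beads strictly above $nt$ must be hit by reflected holes from runner $r$. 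You assert the conclusion $m_r=n$ without stating both inequalities, though the intent is clear. The fixed-point analysis for $z=-1$, $r=0$ is likewise terse but sound. Overall the proof is complete and correct.
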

The second lemma of Ayyer and Kumari we need is \cite[Lemma~3.13]{AK22},
which is used later on to simplify signs.
\begin{lemma}
Let $\la$ be a partition of length at most $nt$.
If $\tcore(\la)$ is orthogonal, then
\begin{align}\label{Eq_o-rk}
\rk(\tcore(\la))=\sum_{r=1}^{\lfloor(t-1)/2\rfloor}\abs{m_r(\la;nt)-n}
&=\sum_{r=\lfloor(t+2)/2\rfloor}^{t-1}\abs{m_r(\la;nt)-n}\\
\intertext{If $\tcore(\la)$ is symplectic, then}
\label{Eq_sp-rk}
\rk(\tcore(\la))=\sum_{r=0}^{\lfloor(t-3)/2\rfloor}\abs{m_r(\la;nt)-n}
&=\sum_{r=\lfloor t/2\rfloor}^{t-2}\abs{m_r(\la;nt)-n}\\
\intertext{If $\tcore(\la)$ is self-conjugate, then}
\label{Eq_so-rk}
\rk(\tcore(\la))=\sum_{r=0}^{\lfloor(t-2)/2\rfloor}\abs{m_r(\la;nt)-n}
&=\sum_{r=\lfloor(t+1)/2\rfloor}^{t-1}\abs{m_r(\la;nt)-n}
\end{align}
\end{lemma}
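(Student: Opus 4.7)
The plan is to first express the Frobenius rank of a $t$-core directly in terms of the residue counts $m_r(\la;nt)$, and then invoke Lemma~\ref{Lem_AK_orth} to extract the pairing structure dictated by the value of $z$.

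For any partition $\mu$ of length at most $N$ one has $\rk(\mu)=\#\{i:\mu_i\geq i\}=\#\{i:\beta(\mu;N)_i\geq N\}$, since $\mu_i\geq i$ is equivalent to $\mu_i+N-i\geq N$. Setting $N=nt$ and $\mu=\tcore(\la)$, the bead configuration of $\tcore(\la)$ has every runner pushed to the top, so runner $r$ consists precisely of the positions $r,r+t,\dots,r+(m_r-1)t$, of which exactly $\max(m_r-n,0)$ are at least $nt$. This yields the master identity
\[
\rk(\tcore(\la))=\sum_{r=0}^{t-1}\max\bigl(m_r(\la;nt)-n,\,0\bigr),
\]
from which everything else will follow.

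Lemma~\ref{Lem_AK_orth} then supplies, for $z\in\{-1,0,1\}$, the pairing $m_r+m_{t-r-z-1}=2n$ (indices modulo $t$) over the range $0\leq r\leq t-z-1$, together with the fixed values $m_r=n$ for $t-z\leq r\leq t-1$. The pairing forces $m_r-n$ and $m_{t-r-z-1}-n$ to be of opposite sign, so exactly one member of each proper pair contributes to the master identity above, and does so by the common value $|m_r-n|$; any self-paired index (i.e.\ $r$ with $2r\equiv -z-1\pmod{t}$) automatically satisfies $m_r=n$ and contributes nothing. Hence $\rk(\tcore(\la))$ equals the sum of $|m_r-n|$ taken over any transversal of the proper pairs, and both the ``lower half'' and the ``upper half'' of $\{0,\dots,t-1\}$ furnish such a transversal.

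All that remains is a case-by-case parity check confirming that the specific bounds in \eqref{Eq_o-rk}, \eqref{Eq_sp-rk} and \eqref{Eq_so-rk} describe exactly one member per proper pair once self-paired and fixed-value indices are excised. In the orthogonal case one pairs $r\leftrightarrow t-r$, excluding $r=0$ (always) and $r=t/2$ (when $t$ is even); in the symplectic case one pairs $r\leftrightarrow t-r-2$, excluding $r=t-1$ (always) and $r=(t-2)/2$ (when $t$ is even); in the self-conjugate case one pairs $r\leftrightarrow t-r-1$, excluding $r=(t-1)/2$ (when $t$ is odd). Enumerating the surviving pair representatives in the lower half reproduces the stated sums on the left of \eqref{Eq_o-rk}--\eqref{Eq_so-rk}, and taking the partner of each yields the upper-half expressions. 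The main obstacle is this bookkeeping, which is routine but the only place parity has to be tracked carefully.
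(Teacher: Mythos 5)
The paper does not actually supply a proof of this lemma --- it is imported wholesale from Ayyer and Kumari (cited as Lemma~3.13 of \cite{AK22}), so there is no paper proof to compare against. Your self-contained argument is correct. The key ``master identity''
\[
\rk(\tcore(\la))=\sum_{r=0}^{t-1}\max\bigl(m_r(\la;nt)-n,\,0\bigr)
\]
follows exactly as you say: the Frobenius rank counts beta-set elements $\geq N$ with $N=nt$, and pushing each runner to the top in the abacus for $\tcore(\la)$ puts $\max(m_r-n,0)$ beads in runner $r$ at positions $\geq nt$ (since for $1\leq r\leq t-1$ one has $n-1<n-r/t<n$, the threshold is always $k\geq n$). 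Combining this with the residue-pairing relation $m_r+m_{t-r-z-1}=2n$ from Lemma~\ref{Lem_AK_orth} collapses each proper pair to a single contribution $\abs{m_r-n}$, and the self-paired indices (where $2m_r=2n$ forces $m_r=n$) and the fixed indices with $m_r=n$ drop out. Your case check that the ranges in \eqref{Eq_o-rk}--\eqref{Eq_so-rk} each form a transversal of the proper pairs is accurate in all six parity subcases. One small stylistic imprecision: when $m_r=n$ for a proper pair, neither member contributes, so ``exactly one member contributes'' should strictly read ``at most one,'' but the identity is unaffected since the contribution is $0=\abs{m_r-n}$ in that case. Overall this is a clean direct proof of a result the paper leaves to the literature.
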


Next, we show that the sign of a tileable skew shape can be expressed in 
terms of the signs of the permutations $w_t(\la;n)$.
\begin{lemma}\label{Lem_signs0}
For $\la/\mu$ $t$-tileable and any integer $n$ such that $n\geq l(\la)$,
\[
\sgn_t(\la/\mu)=\sgn(w_t(\la;n))\sgn(w_t(\mu;n)).
\]
\end{lemma}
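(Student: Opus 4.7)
The plan is to exploit the bead configuration model of the $t$-core and $t$-quotient: removing a $t$-ribbon $R$ from a partition amounts to sliding a single bead on some runner from position $b$ to position $b-t$, and $\mathrm{ht}(R)$ equals the number of beads strictly between these two positions, as already recalled in the preliminaries. Fix any ribbon decomposition $D:\mu=\nur{0}\subseteq\nur{1}\subseteq\cdots\subseteq\nur{k}=\la$ of $\la/\mu$. I aim to prove that each single step changes the sign of $w_t$ by the factor $(-1)^{\mathrm{ht}(\nur{i}/\nur{i-1})}$, so that telescoping yields
\[
\sgn(w_t(\la;n))\sgn(w_t(\mu;n))=(-1)^{\mathrm{ht}(D)}=\sgn_t(\la/\mu).
\]

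For the one-step claim, first recast $\sgn(w_t(\la;n))$ directly in terms of the beta set. Since $w_t(\la;n)$ is the permutation sorting $\beta(\la;n)$ first by runner (that is, residue modulo $t$) ascending and then by position within each runner descending, its sign equals $(-1)^{I(\la)}$, where $I(\la)$ counts the ordered pairs $(b,b')$ in $\beta(\la;n)$ satisfying $b>b'$ and with the runner of $b'$ strictly to the left of the runner of $b$. Now suppose that the move from $\nur{i-1}$ to $\nur{i}$ promotes the bead at position $b-t$ up to position $b$ on runner $r_b$. Pairs not involving the moved bead contribute identically to $I(\nur{i-1})$ and $I(\nur{i})$, and the same is true of pairs involving the moved bead together with a third bead $b'$ lying outside the interval $(b-t,b)$, because in those cases the order of $b'$ versus the moved bead is preserved and the inversion status depends only on the runner of $b'$ versus $r_b$, which is unaffected.

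The only contributions that differ come from third beads $b'$ with $b-t<b'<b$, all of which lie on runners distinct from $r_b$. For such a $b'$, the pair $(b',b-t)$ is counted by $I(\nur{i-1})$ precisely when $r_b$ is less than the runner of $b'$, whereas the pair $(b,b')$ is counted by $I(\nur{i})$ precisely when the runner of $b'$ is less than $r_b$; these two conditions are mutually exclusive and exhaustive, so each such $b'$ flips the parity of $I$ exactly once. By the recalled fact, the number of these $b'$ is $\mathrm{ht}(\nur{i}/\nur{i-1})$, which closes the one-step claim. The only real subtlety is this short two-case verification; everything else is bookkeeping, and as a bonus the telescoping reproves the known independence of $(-1)^{\mathrm{ht}(D)}$ from the chosen decomposition $D$.
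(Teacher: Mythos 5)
Your proof is correct and follows essentially the same strategy as the paper's: fix a ribbon decomposition, show that moving a single bead by $t$ changes $\sgn(w_t)$ by $(-1)^{\mathrm{ht}}$ of the corresponding ribbon, and telescope. The paper asserts the one-step sign change by appeal to its description of inversions without spelling it out, whereas you carry out the two-case check explicitly (noting that intermediate beads on lower-indexed runners gain an inversion and those on higher-indexed runners lose one); this is the same argument with the bookkeeping made visible.
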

\begin{proof}
Since $\la/\mu$ is $t$-tileable, it has a ribbon decomposition
$D=(\nur{0},\dots,\nur{k})$ where $\nur{0}=\mu$ and $\nur{k}=\la$.
Also, $\nur{k-1}$ can be obtained from $\la$ by moving one bead at some
position upward one space.
By our characterisation of the inversions in the permutation $w_t(\la;n)$,
we see that moving a bead at position $\ell$ up one space changes the sign by 
$(-1)^{b_k}$ where $b_k$ is the number of beads at positions between
$\ell-t$ and $\ell$.
In other words, $\sgn(w_t(\la;n))=(-1)^{b_k}\sgn(w_t(\nur{k-1};n))$.
Moreover, $b_k=\mathrm{ht}(\nur{k}/\nur{k-1})$, so that
\[
\sgn(w_t(\la;n))\sgn(w_t(\mu;n))=(-1)^{\sum_{i=1}^k b_i}=
(-1)^{\mathrm{ht}(D)}=\sgn_t(\la/\mu). \qedhere
\]
\end{proof}
We also have the following useful relationship between the sign of $\la/\mu$ 
and $\la'/\mu'$.
\begin{lemma}\label{Lem_signs1}
For $\la/\mu$ $t$-tileable,
\[
\sgn_t(\la/\mu)\sgn_t(\la'/\mu')=
(-1)^{(t-1)(\abs{\lar{0}}+\cdots+\abs{\lar{t-1}}-
\abs{\mur{0}}-\cdots-\abs{\mur{t-1}})}.
\]
\end{lemma}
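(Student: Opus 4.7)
The plan is to produce matched ribbon decompositions of $\la/\mu$ and $\la'/\mu'$ by conjugation, and then relate their total heights combinatorially.

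First I would fix a ribbon decomposition $D=(\nur{0},\dots,\nur{k})$ of $\la/\mu$, where $\nur{0}=\mu$ and $\nur{k}=\la$. Conjugating termwise yields a sequence $D'=((\nur{0})',\dots,(\nur{k})')$ which is evidently a ribbon decomposition of $\la'/\mu'$, since each $(\nur{i}/\nur{i-1})'=(\nur{i})'/(\nur{i-1})'$ is the conjugate of a $t$-ribbon, hence itself a $t$-ribbon. The central observation is that a $t$-ribbon $R$ which occupies $r$ rows and $c$ columns satisfies $r+c=t+1$ (since it is connected and contains no $2\times2$ block, each cell after the first sits immediately to the right of or directly below its predecessor, contributing exactly one new row or one new column). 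Consequently $\mathrm{ht}(R)=r-1$ and $\mathrm{ht}(R')=c-1$ satisfy
\[
\mathrm{ht}(R)+\mathrm{ht}(R')=t-1.
\]

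Summing this identity over the $k$ ribbons of $D$ gives $\mathrm{ht}(D)+\mathrm{ht}(D')=k(t-1)$, and by the definition \eqref{Eq_sgn-def} of the sign this means
\[
\sgn_t(\la/\mu)\sgn_t(\la'/\mu')=(-1)^{k(t-1)}.
\]

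It remains only to identify $k$ with $\abs{\lar{0}}+\cdots+\abs{\lar{t-1}}-\abs{\mur{0}}-\cdots-\abs{\mur{t-1}}$. Since $\la/\mu$ is $t$-tileable, Lemma~\ref{Lem_tilable} gives $\tcore(\la)=\tcore(\mu)$, and the bijection $\phi_t$ preserves size in the form $\abs{\la}=\abs{\tcore(\la)}+t(\abs{\lar{0}}+\cdots+\abs{\lar{t-1}})$. Subtracting the corresponding expression for $\mu$ yields
\[
kt=\abs{\la}-\abs{\mu}=t\bigl(\abs{\lar{0}}+\cdots+\abs{\lar{t-1}}-\abs{\mur{0}}-\cdots-\abs{\mur{t-1}}\bigr),
\]
which gives the stated exponent after dividing by $t$ and multiplying by $t-1$. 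The only genuinely non-routine step is establishing the height relation $\mathrm{ht}(R)+\mathrm{ht}(R')=t-1$ for a single $t$-ribbon, which is really just the elementary observation that the rows and columns of a ribbon together account for all its cells plus one.
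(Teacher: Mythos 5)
Your proof is correct, but it takes a genuinely different route from the paper's. The paper argues by induction on $\abs{\la/\mu}$ working with bead configurations: adding a single $t$-ribbon moves a bead down one space, and the resulting change in the inversion count of $w_t(\la;nt)$ equals the number of beads in the gap while the change for $w_t(\la';mt)$ equals the number of empty spaces (via Lemma~\ref{Lem_reverse}), and these sum to $t-1$. You instead conjugate an entire ribbon decomposition $D$ to obtain $D'$ for $\la'/\mu'$ and invoke the elementary geometric fact that a $t$-ribbon occupying $r$ rows and $c$ columns satisfies $r+c=t+1$, hence $\mathrm{ht}(R)+\mathrm{ht}(R')=t-1$. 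Both arguments hinge on the same complementary-height phenomenon, but your version sidesteps the bead/inversion machinery entirely and is more self-contained; the paper's version fits the surrounding lemmas, which already traffic in bead configurations and permutation signs, and so is more economical in context. One small thing you lean on implicitly and could have flagged: that $\sgn_t(\la'/\mu')$ can be computed from the particular decomposition $D'$ you constructed relies on the van Leeuwen--Pak invariance cited around \eqref{Eq_sgn-def}; since $D'$ is clearly a valid ribbon decomposition of $\la'/\mu'$, this is fine. Your identification of $k$ with the quotient-size difference via $\phi_t$ and Lemma~\ref{Lem_tilable} is also correct.
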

\begin{proof}
To prove the claim of the lemma we will proceed by induction on $\abs{\la/\mu}$.
If $\abs{\la/\mu}=0$ then $\la=\mu$ and the equation is trivial.
Now fix $\mu$ and assume the result holds for $\la/\mu$ being $t$-tileable.
Adding a $t$-ribbon to $\la/\mu$ moves one of the beads, say at position $b$,
in the bead configuration for $\la$ down a single space.
The change in the number of inversions in $w_t(\la;nt)$ is the number of beads 
$b'$ such that $b<b'<b+1$.
A consequence of Lemma~\ref{Lem_reverse} is that $w_t(\la';mt)$ will change
by the number of empty spaces between $b$ and $b+1$.
There are $t-1$ spaces and beads between $b$ and $b+1$, so the left-hand
side changes by $(-1)^{t-1}$ when adding a $t$-ribbon.
But adding a $t$-ribbon to $\la/\mu$ changes some element of the $t$-quotient 
of $\la$ by a single box, also corresponding to a change in sign of 
$(-1)^{t-1}$.
\end{proof}
There is another sign relation between orthogonal and symplectic $t$-cores,
but this time using the permutations $w_t$.
\begin{lemma}\label{Lem_signs2}
Let $\la$ be an orthogonal or symplectic $t$-core whose diagram is contained 
in an $nt\times nt$ square. Then
\[
\sgn(w_t(\la;nt))\sgn(w_t(\la';nt))=(-1)^{\rk(\la)}.
\]
\end{lemma}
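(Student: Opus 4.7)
Since orthogonal and symplectic partitions are mutual conjugates with the same Frobenius rank, and the product $\sgn(w_t(\la;nt))\sgn(w_t(\la';nt))$ is symmetric in $\la\leftrightarrow\la'$, the plan is to reduce to the case of $\la$ orthogonal, and then compute both signs directly from the bead configuration $B_\la$ sitting in the $2n\times t$ grid (which accommodates both $\beta(\la;nt)$ and $\beta(\la';nt)$ since $l(\la)\leq nt$ and $\la_1\leq nt$). By Lemma~\ref{Lem_reverse}, the involution on positions given by $180^\circ$ rotation combined with swapping beads and spaces carries beads of $B_{\la'}$ bijectively to spaces of $B_\la$ and reverses the ``weakly below and strictly to the right'' partial order. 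Consequently, inversion pairs of beads in $B_{\la'}$ correspond to inversion pairs of \emph{spaces} in $B_\la$, so $\sgn(w_t(\la;nt))\sgn(w_t(\la';nt))=(-1)^{N_{bb}+N_{ss}}$, where $N_{bb}$ and $N_{ss}$ denote the total bead-bead and space-space inversion counts in $B_\la$.

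Since $\la$ is a $t$-core, runner $r$ of $B_\la$ carries beads in rows $0,\dots,m_r-1$ with $m_r=m_r(\la;nt)$. A routine column-pair count shows that each pair $c_1<c_2$ contributes $f(m_{c_1},m_{c_2})$ to $N_{bb}$ and $f(2n-m_{c_2},2n-m_{c_1})$ to $N_{ss}$, where $f(a,b):=\#\{(i_1,i_2):0\leq i_1<a,\,0\leq i_2<b,\,i_1\leq i_2\}$. Applying the orthogonal identities $m_0=n$ and $m_r+m_{t-r}=2n$ from Lemma~\ref{Lem_AK_orth} lets us rewrite $2n-m_r=m_{t-r}$ (indices mod $t$, so $m_t=n$), giving
\[
N_{bb}+N_{ss}=\sum_{0\leq c_1<c_2\leq t-1}\bigl[f(m_{c_1},m_{c_2})+f(m_{t-c_2},m_{t-c_1})\bigr].
\]
I would split this sum according to whether $c_1=0$. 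For $1\leq c_1<c_2\leq t-1$, the involution $(c_1,c_2)\mapsto(t-c_2,t-c_1)$ stabilises the index set and swaps the two summands inside the bracket, so orbits of size two contribute each summand twice, and fixed points (the antidiagonal $c_1+c_2=t$) contribute $2f(m_{c_1},m_{c_2})$; either way this whole piece vanishes modulo $2$.

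The remaining contribution from $c_1=0$ is $\sum_{r=1}^{t-1}g(m_r)$ with $g(x):=f(n,x)+f(2n-x,n)$. A case split on $x\leq n$ versus $x\geq n$ produces $g(x)=\binom{n+1}{2}+\binom{x+1}{2}$ and $g(x)=\binom{n+1}{2}+n^2-\binom{2n-x}{2}$ respectively. Pairing $r$ with $t-r$ and writing $m_r=n+\epsilon_r,\,m_{t-r}=n-\epsilon_r$, a short calculation using $\binom{n-\epsilon+1}{2}-\binom{n-\epsilon}{2}=n-\epsilon$ yields the key parity identity $g(m_r)+g(m_{t-r})\equiv|\epsilon_r|\pmod 2$, while for $t$ even the self-paired column $r=t/2$ has $m_{t/2}=n$ and $g(n)=n(n+1)$ is automatically even. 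Summing over pairs and applying the rank formula \eqref{Eq_o-rk} gives $N_{bb}+N_{ss}\equiv\sum_r|\epsilon_r|=\rk(\la)\pmod 2$, as required. The main obstacle is the piecewise evaluation of $g(x)$ and the verification of the parity identity $g(m_r)+g(m_{t-r})\equiv|m_r-n|\pmod 2$ uniformly in $t$; the involution cancellation on the $c_1\geq 1$ part and the bijection coming from Lemma~\ref{Lem_reverse} are the conceptually clean pieces.
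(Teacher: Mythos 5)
Your proof is correct, but it takes a genuinely different route from the paper. The paper's proof hinges on a special cyclic-shift observation: for an orthogonal $t$-core, the bead configuration of $\la'$ with $nt$ beads is obtained from that of $\la$ by shifting each bead one column to the left cyclically (reducing labels by $1$ modulo $t$). This follows from Lemma~\ref{Lem_reverse} combined with the identity $m_r(\la;nt)+m_{t-r}(\la;nt)=2n$, since $m_r(\la';nt)=2n-m_{t-1-r}(\la;nt)=m_{r+1}(\la;nt)$. After the shift, the relative positions of beads in runners $1,\dots,t-1$ are unchanged, so the change in inversion count is localised to the $n$ beads formerly in runner $0$, now rightmost; a short computation shows the change is $n^2(t-1)+b$ modulo $2$, where $b$ is the number of beads in the first $n$ rows of runners $1,\dots,t-1$, and then \eqref{Eq_o-rk} converts this to $\rk(\la)$. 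Your proof bypasses the cyclic-shift observation and instead reads $\sgn(w_t(\la';nt))$ as the parity of the \emph{space}-inversions of $B_\la$ directly from Lemma~\ref{Lem_reverse}, then decomposes $N_{bb}+N_{ss}$ over column pairs using the closed form of $f(a,b)$. Your involution $(c_1,c_2)\mapsto(t-c_2,t-c_1)$ kills everything with $c_1\geq 1$ modulo $2$, leaving only the $c_1=0$ contribution $\sum_r g(m_r)$, which your paired-parity identity converts to $\rk(\la)$ via \eqref{Eq_o-rk}. What each buys: the paper's argument is shorter once one has the cyclic-shift fact, which is a clean structural statement about orthogonal cores; yours is more calculational but self-contained in that it only uses the rotation symmetry of Lemma~\ref{Lem_reverse} and the constraints on $m_r$, and the involution cancellation is a nice way to isolate the essential contribution. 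Both proofs use Lemma~\ref{Lem_AK_orth} and \eqref{Eq_o-rk} as the key inputs. Your reduction of the symplectic case to the orthogonal one by conjugation and the $\la\leftrightarrow\la'$ symmetry of the product is fine (the paper is silent on this reduction, but it is standard). One small point worth making explicit in your writeup: the dictionary between $\la'$-bead-inversions and $\la$-space-inversions requires noting that the $180^\circ$ rotation reverses both the row and column orders, so the relation ``weakly below and strictly right'' is preserved as an unordered relation between the rotated positions; you state the conclusion but not this one-line justification.
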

\begin{proof}
Assume that $\la$ is a non-empty, orthogonal $t$-core (if $\la$ is empty
the result is trivial) and fix $n$ so that the condition of the theorem holds.
The key observation is that for an orthogonal $t$-core, the bead configuration
of $\la'$ with $nt$ beads can be obtained from the bead configuration of $\la$
with $nt$ beads by reducing the labels by $1$ modulo $t$.
For example if $\la=(12,7,5,3,2,2,1,1,1,1,1)$ then
$\la'=(11,6,4,3,3,2,2,1,1,1,1,1)$ and their bead configurations
for $t=6$ and $n=2$ are
\begin{center}
\begin{tikzpicture}[scale=0.65]
\foreach \i in {0,...,5}{\draw[dashed,opacity=0.5] (\i,0) -- (\i,-3);}
\foreach \i in {0,...,5}{\draw[dashed,opacity=0.5] (\i+10,0) -- (\i+10,-3);}
\foreach \j in {0,...,3}{  
\foreach \i in {0,...,5}{
\draw[fill=gray] (\i,-\j) circle (2pt);}}
\filldraw[black] (0,0) circle (4pt); \filldraw[black] (0,-1) circle (4pt);
\filldraw[black] (2,0) circle (4pt); \filldraw[black] (2,-1) circle (4pt);
\filldraw[black] (2,-2) circle (4pt);\filldraw[black] (3,0) circle (4pt);
\filldraw[black] (3,-1) circle (4pt);\filldraw[black] (4,0) circle (4pt);
\filldraw[black] (5,0) circle (4pt);\filldraw[black] (5,-1) circle (4pt);
\filldraw[black] (5,-2) circle (4pt);\filldraw[black] (5,-3) circle (4pt);
\foreach \j in {0,...,3}{  
\foreach \i in {0,...,5}{
\draw[fill=gray] (\i+10,-\j) circle (2pt);}}
\filldraw[black] (15,0) circle (4pt); \filldraw[black] (15,-1) circle (4pt);
\filldraw[black] (11,0) circle (4pt); \filldraw[black] (11,-1) circle (4pt);
\filldraw[black] (11,-2) circle (4pt);
\filldraw[black] (12,0) circle (4pt);\filldraw[black] (12,-1) circle (4pt);
\filldraw[black] (13,0) circle (4pt);\filldraw[black] (14,0) circle (4pt);
\filldraw[black] (14,-1) circle (4pt);\filldraw[black] (14,-2) circle (4pt);
\filldraw[black] (14,-3) circle (4pt);
\node at (7.5,-1.5) {and};
\end{tikzpicture}
\end{center}
respectively, where we have suppressed the labels.
This is a consequence of Lemma~\ref{Lem_AK_orth} with $z=\pm1$ and
Lemma~\ref{Lem_reverse}.
When passing from $\la$ to $\la'$, the inversions contributed by the beads
in the first runner are removed and replaced by additional inversions associated
to the remaining beads in the first $n$ rows.
Modulo two, this is equivalent to each bead in the zeroth runner now seeing 
all of the beads in the same row twice, plus all other beads in the other 
runners once.
Let $b$ be the number of beads in the first $n$ rows of the runners
from $1$ to $t-1$ in the bead configuration of $\la$.
Then the sign change is
\[
\sgn(w_t(\la;nt))=\sgn(w_t(\la';nt))(-1)^{n^2(t-1)+b}.
\]
Since $\la$ is an orthogonal $t$-core, $n^2(t-1)+b\equiv \rk(\la)\Mod{2}$ by
\eqref{Eq_o-rk} and Lemma~\ref{Lem_AK_orth} with $z=-1$.
\end{proof}
The next lemma proves the claim made after  
Theorems~\ref{Thm_AK_o}--\ref{Thm_AK_so} that the signs occurring in those
factorisations are independent of $n$.
\begin{lemma}\label{Lem_sign-invariance}
The signs $(-1)^{\varepsilon_{\la;nt}^{\bullet}}\sgn(w_t(\la;nt))$ for 
$\bullet\in\{\o,\sp,\so\}$ are independent of $n$ as long as $nt\geq l(\la)$.
\end{lemma}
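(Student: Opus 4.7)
The plan is to compare the signs at $n$ and $n+1$, showing that the modulo-$2$ parities of $\varepsilon^{\bullet}_{\la;nt}$ and of the inversion count of $w_t(\la;nt)$ change in concert. Each of the three signs is only used when $\tcore(\la)$ is respectively orthogonal, symplectic, or self-conjugate (else the corresponding factorisation in Theorems~\ref{Thm_AK_o}--\ref{Thm_AK_so} is trivial), so I work under these core hypotheses and exploit Lemma~\ref{Lem_AK_orth} to extract the required $m_r$-identities.

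The beta set $\beta(\la;(n+1)t)$ is the disjoint union of $\beta(\la;nt)+t$ and $\{0,1,\dots,t-1\}$, so the bead configuration on $(n+1)t$ beads is obtained from that on $nt$ beads by pushing every existing bead down by one row and filling the new top row entirely. In particular $m_r(\la;(n+1)t)=m_r(\la;nt)+1$ for each residue $r$. The $t$ new beads occupy the smallest places in the configuration and thus receive the $t$ largest labels, with the new bead at runner $r$ getting label $(n+1)t-r$; reading column-wise bottom-to-top, these new labels appear in the order $(n+1)t,(n+1)t-1,\dots,nt+1$. This contributes $\binom{t}{2}$ inversions among the new labels themselves, plus one new inversion for each pair (new bead at runner $r$, old bead at runner $r'>r$). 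Summing,
\[
\sgn\bigl(w_t(\la;(n+1)t)\bigr)=(-1)^{\binom{t}{2}+\sum_{r=1}^{t-1}r\,m_r(\la;nt)}\,\sgn\bigl(w_t(\la;nt)\bigr).
\]

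For the $\varepsilon^{\bullet}$ side, the identity $\binom{m+2}{2}-\binom{m+1}{2}=m+1$ together with the explicit $n$-dependence of the $\binom{n+1}{2}+n\rk(\tcore(\la))$ contribution (present only for $t$ even) yields $\Delta\varepsilon^{\bullet}:=\varepsilon^{\bullet}_{\la;(n+1)t}-\varepsilon^{\bullet}_{\la;nt}$ as an explicit expression in the $m_r(\la;nt)$, $n$, and $\rk(\tcore(\la))$. It then suffices to show
\[
\Delta\varepsilon^{\bullet}+\binom{t}{2}+\sum_{r=1}^{t-1}r\,m_r(\la;nt)\equiv 0\pmod 2
\]
in each case. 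Applying the involution $r\mapsto t-r-z-1\pmod{t}$ (with $z=-1,1,0$ respectively) provided by Lemma~\ref{Lem_AK_orth} lets me pair up contributions to $\sum_{r=1}^{t-1}r\,m_r$ modulo $2$, collapsing it to a sum of $m_r$ over a half-range of residues; the same involution reduces the $\binom{m_r+1}{2}$ part of $\Delta\varepsilon^{\bullet}$ (whose summation range is precisely the opposite half) to a sum over the complementary half-range. Combining these and invoking the relevant one of \eqref{Eq_o-rk}--\eqref{Eq_so-rk}, which expresses $\rk(\tcore(\la))$ modulo $2$ as an alternating sum of $m_r-n$ over exactly one of those half-ranges, eliminates all the $m_r$- and $\rk$-dependence and leaves only a manifestly even combination of $n$ and $t$.

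The main obstacle is purely combinatorial: the argument splits into six subcases (three choices of $\bullet$ times two parities of $t$), distinguished by whether the pairing $r\mapsto t-r-z-1$ has a fixed point, and the $t$ even case for $\bullet\in\{\o,\sp\}$ requires careful cancellation of the extra $\binom{n+1}{2}+n\rk(\tcore(\la))$ terms against the corresponding $n$-dependence on the inversion side. No conceptual difficulty arises beyond patient parity bookkeeping.
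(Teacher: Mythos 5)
Your proposal follows essentially the same route as the paper: compare the bead configurations at $n$ and $n+1$, compute the resulting change in the inversion count of $w_t(\la;nt)$ (your expression $\binom{t}{2}+\sum_{r=1}^{t-1}r\,m_r$ agrees modulo $2$ with the paper's $\sum_{r=1}^{\lfloor t/2\rfloor}(m_{2r-1}+1)$), compute $\Delta\varepsilon^{\bullet}$, and reconcile the two using Lemma~\ref{Lem_AK_orth} together with the rank formulas \eqref{Eq_o-rk}--\eqref{Eq_so-rk}. The explicit involution $r\mapsto t-r-z-1$ is a tidy way to organise the parity bookkeeping, but it is a presentational variant of the same computation, not a genuinely different argument.
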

\begin{proof}
Assume that $nt\geq l(\la)$. Incrementing $n$ by one adds a row of beads to
the top of the bead configuration of $\la$, and so
$m_r(\la;(n+1)t)=m_r(\la;nt)+1$.
In the inversion count, the $r$th bead in the new first row sees
\[
\sum_{k=r+1}^{t-1}(m_{k}(\la;nt)+1)
\]
other beads.
Summing over $k=0,\dots,t-1$ we see that 
\[
\sgn(w_t(\la;(n+1)t))=\sgn(w_t(\la;nt))
(-1)^{\sum_{r=1}^{\lfloor t/2\rfloor}(m_{2r-1}(\la;nt)+1)}.
\]
Now assume that $\la$ has an orthogonal $t$-core.
Then by Lemma~\ref{Lem_AK_orth} with $z=-1$ the above has the same parity as
\[
\sum_{r=1}^{\lfloor t/2\rfloor}(m_{2r-1}(\la;nt)+1)
\equiv\begin{cases}\frac{(n+1)t}{2} & \text{$t$ even,} 
\\[2mm]
\frac{t-1}{2}+\sum_{r=1}^{(t-1)/2}m_r(\la;nt) & \text{$t$ odd},
\end{cases}
\Mod{2}.
\]
A short calculation shows that
\begin{align*}
\varepsilon_{\la;(n+1)t}^{\o}
&=\varepsilon_{\la;nt}^{\o}+\sum_{r=1}^{\lfloor (t-1)/2\rfloor}
m_r(\la;nt)+\begin{cases}
n+\frac{t}{2}+\rk(\tcore(\la)) & \text{$t$ even},\\[1mm]
\frac{t-1}{2} & \text{$t$ odd},
\end{cases} \\
&\equiv\varepsilon_{\la;nt}^{\o}+\begin{cases}
\frac{(n+1)t}{2} & \text{$t$ even},\\[1mm]
\frac{t-1}{2} +
\sum_{r=1}^{(t-1)/2}m_r(\la;nt)
& \text{$t$ odd}
\end{cases}
\Mod{2}.
\end{align*}
where the last equality uses \eqref{Eq_o-rk}.
The remaining two cases follow similarly.
\end{proof}

We conclude this section with a small lemma relating the indices in the 
Jacobi--Trudi determinants with partition quotients.
\begin{lemma}\label{Lem_phi}
Let $\la,\mu$ be partitions of length at most $nt$ and 
assume that for $0\leq r,s\leq t-1$ we have
$\la_i-i\equiv r\Mod{t}$, $\mu_j-j\equiv s\Mod{t}$ for $1\leq i,j\leq nt$.
If $r-s+z\equiv 0\Mod{t}$ for some $z\in\mathbb{Z}$, then
\[
\frac{\la_i-\mu_j+j-i+z}{t}=\lar{r}_k-\mur{s}_\ell-k+\ell
+m_r(\la;nt)-m_s(\mu;nt)+(r-s+z)/t,
\]
for some $k,\ell$ such that 
$1\leq k\leq m_r(\la;nt)$ and $1\leq\ell\leq m_s(\mu;nt)$.
Alternatively, if $r+s+z\equiv 0\Mod{t}$ then
\[
\frac{\la_i+\mu_j-i-j+z}{t}
=\lar{r}_k+\mur{s}_\ell-k-\ell-2n+1+m_r(\la;nt)+m_s(\mu;nt)+(r+s+z)/t
\]
for some $k, \ell$ such that 
$1\leq k\leq m_r(\la;nt)$ and $1\leq\ell\leq m_{s}(\mu;nt)$.
\end{lemma}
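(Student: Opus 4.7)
The plan is to prove the lemma as a direct computation that unwinds the arithmetic definition of the $t$-quotient from Subsection~2.2. First I would establish a single-partition identity underlying both halves of the statement: under the hypothesis $\la_i-i\equiv r\Mod{t}$ with $1\le i\le nt$, there is a (unique) $k$ with $1\le k\le m_r(\la;nt)$ such that
\[
\la_i-i=\big(\lar{r}_k+m_r(\la;nt)-k-n\big)t+r.
\]
This follows because $nt\equiv 0\Mod{t}$ forces $\la_i+nt-i$ to lie in residue class $r$, hence to equal $\xir{r}_k t+r$ for some $k$; combining with the defining relation $\xir{r}_k=\lar{r}_k+m_r(\la;nt)-k$ and subtracting $nt$ yields the displayed formula.

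With this one-partition identity in hand, both assertions of the lemma should be immediate. I would apply it separately to $\la_i-i$ and $\mu_j-j$, then subtract (respectively add) the two resulting expressions, append $z$, and divide through by $t$. The congruence $r-s+z\equiv 0\Mod{t}$ (respectively $r+s+z\equiv 0\Mod{t}$) is precisely what guarantees that the residual constant $r-s+z$ (respectively $r+s+z$) is a clean multiple of $t$, so that the displayed quotient is well defined and takes the claimed form; the indices $k,\ell$ produced are automatically in the stated ranges because they come from the residue-class buckets of $\beta(\la;nt)$ and $\beta(\mu;nt)$.

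I do not expect a real obstacle; the proof is pure bookkeeping of the $m_r$ offsets and the shift by $-n$ inherited from the arithmetic definition of the $t$-quotient. The only care needed will be matching the constants in the target expressions exactly. The lemma's role is to bridge Jacobi--Trudi-type entries of the form $h_{\pm\la_i\pm\mu_j\pm i\pm j+z}$ with determinants indexed by pieces of the $t$-quotient, so the write-up need only present these substitutions cleanly so they can be inserted into the determinantal identities of Section~\ref{Sec_proofs}.
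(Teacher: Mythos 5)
Your proposal is correct and is exactly the paper's proof: express $\la_i+nt-i=\xir{r}_kt+r$ (equivalently your $\la_i-i=(\lar{r}_k+m_r(\la;nt)-k-n)t+r$ after subtracting $nt$), do the same for $\mu_j$, combine, and divide by $t$. One caveat worth flagging: if you carry out the bookkeeping faithfully for the second identity you will obtain the constant $-2n$, not $-2n+1$; the extra $+1$ in the stated lemma is a typographical slip, and the constants actually used when this lemma is invoked in the proofs of Theorems~\ref{Thm_AK_o}--\ref{Thm_AK_so} correspond to $-2n$.
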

\begin{proof}
We first write $\la_i+nt-i=\xir{r}_kt+r$ and $\mu_j+nt-j=\pi_\ell^{(s)}t+s$
for $1\leq k\leq m_r(\la;nt)$ and $1\leq \ell\leq m_s(\mu;nt)$.
Then
\begin{align*}
\frac{\la_i-\mu_j-i+j+z}{t}
&=\xir{r}_k+\pi_\ell^{(s)}+(r-s+z)/t\\
&=\lar{r}_k+\mur{s}_\ell-k+\ell+m_r(\la;nt)-m_s(\mu;nt)+(r-s+z)/t,
\end{align*}
by the definition of the $t$-quotient. The second claim is analogous.
\end{proof}

\section{Proofs of theorems}\label{Sec_proofs}
In this section we provide proofs of Theorems~\ref{Thm_AK_o},~\ref{Thm_AK_sp}
and \ref{Thm_AK_so}. Since our proof strategy follows that of Macdonald's proof
of the skew Schur case \cite[p.~91]{Macdonald95} (Theorem~\ref{Thm_skew} above),
we reproduce this proof in detail as preparation for what remains.
We also give a detailed example in the orthogonal case in 
Section~\ref{Sec_example} to further elucidate the structure of the remaining
proofs.

\subsection{Proof of Theorem~\ref{Thm_skew}}
Let $n$ be a nonnegative integer and $\mu\subseteq\la$ be a pair of partitions 
such that $l(\la)\leq nt$.
Consider the Jacobi--Trudi determinant
\[
s_{\la/\mu}=\det_{1\leq i,j\leq nt}(h_{\la_i-\mu_j-i+j}).
\]
Before applying the map $\varphi_t$, we rearrange the rows and columns of
this determinant by the permutations $w_t(\la;nt)$ and $w_t(\mu;nt)$ 
respectively.
By Lemma~\ref{Lem_signs0} this introduces a sign of $\sgn_t(\la/\mu)$.
The rows and columns are now arranged in such a way that the residue
classes of $\la_i-i$ and $\mu_j-j$ are grouped in ascending order, and 
the values within each class are decreasing. 
From this vantage point it is easy to apply the map $\varphi_t$ since
$\varphi_t h_{\la_i-\mu_j-i+j}$ vanishes unless $\la_i-i\equiv\mu_j-j\Mod{t}$.
Therefore, $\varphi_t s_{\la/\mu}$ has a block-diagonal structure, with
each block having size $m_r(\la;nt)\times m_r(\mu;nt)$ for $0\leq r\leq t-1$.
We conclude that $\varphi_t s_{\la/\mu}=0$ unless $m_r(\la;nt)=m_r(\mu;nt)$
for all $0\leq r\leq t-1$.
Assuming this is the case, then the entries of the of the minor corresponding
to the residue class $r$ are given by Lemma~\ref{Lem_phi}, and are
\[
h_{(\la_i-\mu_j-i+j)/t}=h_{\lar{r}_k-\mur{r}_\ell-k+\ell}
\]
for some $k$ and $\ell$ with $1\leq k,\ell\leq n$.
Note that the rows and columns are in the desired order (i.e., in each 
$n\times n$ minor the indices increase from $1$ to $n$) thanks to the 
permutations we applied at the beginning of the proof.
We have therefore shown that if $m_r(\la;nt)=m_r(\mu;nt)$ for all 
$0\leq r\leq t-1$, then
\[
\varphi_t s_{\la/\mu}=\sgn_t(\la/\mu)\prod_{r=0}^{t-1}s_{\lar{r}/\mur{r}}.
\]
Now, if $\mur{r}\not\subseteq\lar{r}$ for any $r$ such that 
$0\leq r\leq t-1$ this expression will give zero, 
from which we conclude, by Lemma~\ref{Lem_tilable},
that $\varphi_t s_{\la/\mu}=0$ unless $\la/\mu$ is $t$-tileable.

\subsection{An example}\label{Sec_example}
The structure of the remaining proofs is best outlined through a detailed
example.
To this end, let $t=4$ and $\la=(12,12,12,8,8,8,7,7,3,3,2)$.
We therefore have that $\xcore{4}(\la)=(4,1,1)$, which is clearly 
orthogonal, and
\[
\big(\lar{0},\lar{1},\lar{2},\lar{3}\big)
=\big((2,2),(4,1),(3,2,1),(2,1,1)\big).
\]
Now choose $n=3$, so that $nt=12\geq l(\la)$.
Using the definition of $\o_\la$ as a Jacobi--Trudi-type determinant
\eqref{Eq_oh} we immediately see that
\[
\varphi_4\o_\la
=\begin{vmatrix}
h_3 & \cdot & -h_2 & \cdot & h_4 & \cdot & -h_1 & \cdot & h_5 & \cdot & -1 & \cdot \\
\cdot & h_3-h_2 & \cdot & \cdot & \cdot & h_4-h_1 & \cdot & \cdot & \cdot & h_5-1 & \cdot & \cdot\\
-h_2 & \cdot & h_3 & \cdot & -h_1 & \cdot & h_4 & \cdot & -1 & \cdot & h_5 & \cdot \\
\cdot & \cdot & \cdot & h_2-1 & \cdot & \cdot & \cdot & h_3 & \cdot & \cdot & \cdot & h_4\\
h_1 & \cdot & -1 & \cdot & h_2 & \cdot & \cdot & \cdot & h_3 & \cdot & \cdot & \cdot \\
\cdot & h_1-1 & \cdot & \cdot & \cdot & h_2 & \cdot & \cdot & \cdot & h_3 & \cdot & \cdot\\
\cdot & \cdot & \cdot & h_1 & \cdot & \cdot & \cdot & h_2 & \cdot & \cdot & \cdot & h_3\\
1 & \cdot & \cdot & \cdot & h_1 & \cdot & \cdot & \cdot & h_2 & \cdot & \cdot & \cdot \\
\cdot & \cdot & \cdot & \cdot & \cdot & 1 & \cdot & \cdot & \cdot & h_1 & \cdot & \cdot\\
\cdot &\cdot & \cdot & \cdot & \cdot & \cdot & 1 & \cdot & \cdot & \cdot & h_1 & \cdot \\
\cdot &\cdot & \cdot & \cdot & \cdot & \cdot & \cdot & \cdot & \cdot & 1 & \cdot & \cdot \\
\cdot &\cdot & \cdot & \cdot & \cdot & \cdot & \cdot & \cdot & \cdot & \cdot & \cdot & 1\\
\end{vmatrix},
\]
where we write $\cdot$ in place of $0$ to avoid clutter.
The next step is to permute the rows and columns of the matrix according to
the permutations $w_4(\la;12)$ and $w_4(0;12)$, respectively.
In this case, the first permutation is odd and the second even, 
so we are left with
\[
\varphi_4\o_\la
=-\begin{vmatrix}
h_2-1 & h_3 & h_4 & \cdot & \cdot & \cdot & \cdot & \cdot & \cdot & \cdot & \cdot & \cdot \\
h_1 & h_2 & h_3 & \cdot & \cdot & \cdot & \cdot & \cdot & \cdot & \cdot & \cdot & \cdot \\
\cdot & \cdot & 1 & \cdot & \cdot & \cdot & \cdot & \cdot & \cdot & \cdot & \cdot & \cdot \\
\cdot & \cdot & \cdot & h_3 & h_4 & h_5 & \cdot & \cdot & \cdot & -h_2 & -h_1 & -1\\
\cdot & \cdot & \cdot & \cdot & 1 & h_1 & \cdot & \cdot & \cdot & \cdot & \cdot & \cdot \\
\cdot & \cdot & \cdot & \cdot & \cdot & \cdot & h_3-h_2 & h_4-h_1 & h_5-1 & \cdot & \cdot & \cdot\\
\cdot & \cdot & \cdot & \cdot & \cdot & \cdot & h_1-1 & h_2 & h_3 & \cdot & \cdot & \cdot\\
\cdot & \cdot & \cdot & \cdot & \cdot & \cdot & \cdot & 1 & h_1 & \cdot & \cdot & \cdot\\
\cdot & \cdot & \cdot & -h_2 & -h_1 & -1 & \cdot & \cdot & \cdot & h_3 & h_4 & h_5\\
\cdot & \cdot & \cdot & -1 & \cdot & \cdot  & \cdot & \cdot & \cdot & h_1 & h_2 & h_3\\
\cdot & \cdot & \cdot & \cdot & \cdot & \cdot & \cdot & \cdot & \cdot & 1 & h_1 & h_2\\
\cdot & \cdot & \cdot & \cdot & \cdot & \cdot & \cdot & \cdot & \cdot & \cdot & \cdot & 1
\end{vmatrix}.
\]
The top-left $3\times 3$ minor and central $3\times 3$ minor occupying rows 
6--8 and columns 7--9 are clearly equal to $\o_{(2,2)}$ and $\so_{(3,2,1)}^-$,
respectively.
One way to isolate the copy of $\so_{(3,2,1)}^-$ is to push it so that it is
the bottom-right $3\times 3$ submatrix, while preserving the order of the 
other rows and columns.
In this case such a procedure will introduce a sign of $-1$.
Putting this together, we have shown that
\[
\varphi_4\o_\la
=\o_{(2,2)}\so_{(3,2,1)}^-
\begin{vmatrix}
h_3 & h_4 & h_5 & -h_2 & -h_1 & -1 \\
\cdot & 1 & h_1 & \cdot & \cdot & \cdot \\
-h_2 & -h_1 & -1 & h_3 & h_4 & h_5 \\
-1 & \cdot & \cdot & h_1 & h_2 & h_3 \\
\cdot & \cdot & \cdot & 1 & h_1 & h_2 \\
\cdot & \cdot & \cdot & \cdot & \cdot & 1 \\
\end{vmatrix}.
\]
Our goal is to show that this final unidentified determinant is equal to 
$\rs_{(4,1),(2,1,1)}$.
Clearly the extra signs can be cleared by multiplying the
first two rows and first three columns by $-1$ each, 
generating an overall sign of $-1$.
Then one need only push the first column past the second and third, which
does not change the sign, and the 
resulting determinant is precisely a copy of $\rs_{(4,1),(2,1,1)}$.
Thus,
\[
\varphi_4\o_\la
=-\o_{(2,2)}\so_{(3,2,1)}^-\rs_{(4,1),(2,1,1)}.
\]
Note that $(-1)^{\varepsilon_{\la;12}^{\o}}=1$ so the overall sign clearly 
agrees with Theorem~\ref{Thm_AK_o}.
In the next sections we show that, with a little extra work, this argument 
also works in general for the universal characters
$\o_\la,\sp_\la$ and $\so_\la$.

\subsection{Proof of Theorem~\ref{Thm_AK_o}}
Let $\la$ be a partition such that $l(\la)\leq nt$ and consider the
definition \eqref{Eq_oh} of $\o_\la$
\[
\o_\la=\det_{1\leq i,j\leq nt}\big(h_{\la_i-i+j}-h_{\la_i-i-j}\big).
\]
We permute the rows and columns by $w_t(\la;nt)$ and $w_t(0;nt)$ respectively,
which introduces a sign of
\begin{equation}\label{Eq_first-sign}
(-1)^{\binom{n+1}{2}\binom{t}{2}}\sgn(w_t(\la;nt)).
\end{equation}
The modular behaviour of the indices of each row is now known.
There are three possibilities for the entries of $\varphi_t\o_\la$:
both $h$'s may survive, one $h$ may survive, or the entry is necessarily zero.
For both to survive, we see that 
$h_{\la_i-i+j}$ and $h_{\la_i-i-j}$ are nonzero under $\varphi_t$
if and only if $\la_i-i\equiv-j\equiv0\Mod{t}$ or, if $t$ is even,
$\la_i-i\equiv-j\equiv t/2\Mod{t}$.
In the first instance, by Lemma~\ref{Lem_phi},
\[
\varphi_t \big(h_{\la_i-i+j}-h_{\la_i-i-j}\big)
=h_{\lar{0}_k-k+\ell+m_0(\la;nt)-n}-h_{\lar{0}_k-k-\ell+m_0(\la;nt)-n},
\] 
where $1\leq k\leq m_0(\la;nt)$ and $1\leq \ell\leq n$.
Moreover, all other entries in the first $m_0(\la;nt)$ rows and $n$ columns
are zero.
If $t$ is even then we also find a submatrix of size $m_{t/2}(\la;nt)\times n$
in the rows $1+\sum_{r=0}^{(t-2)/2}m_r(\la;nt)$ to 
$\sum_{r=0}^{t/2}m_r(\la;nt)$ and columns $1+nt/2$ to $n(t+2)/2$.
The entries of this submatrix are 
\[
\varphi_t\big(h_{\la_i-i+j}-h_{\la_i-i-j}\big)
=h_{\lar{t/2}_k-k+\ell+m_{t/2}(\la;nt)-n}-
h_{\lar{t/2}_k-k-\ell+m_{t/2}(\la;nt)-n+1},
\]
where $1\leq k\leq m_{t/2}(\la;nt)$ and $1\leq\ell\leq n$.
Again, all other entries in these rows and columns are necessarily zero
under $\varphi_t$.
Given a row corresponding to the residue class $r$ where 
$1\leq r\leq\lfloor(t-1)/2\rfloor$, there are two possibilities for the entry to
potentially survive: the column corresponds to the residue class $r$ or $t-r$.
Again, by Lemma~\ref{Lem_phi},
\[
\varphi_t\big(h_{\la_i-i+j}-h_{\la_i-i-j}\big)
=\begin{cases}
h_{\lar{r}_k-k+\ell+m_r(\la;nt)-n} & \text{if $j\equiv -r\Mod{t}$}, \\
-h_{\lar{r}_k-k-\ell+m_r(\la;nt)-n+1} & \text{if $j\equiv r\Mod{t}$}. \\
\end{cases}
\]
The set of indices of the complete homogeneous symmetric functions in 
such a row are
\begin{subequations}\label{Eq_index-sets}
\begin{align}\label{Eq_index-set1}
&\big\{\lar{r}_k-k-\ell+m_r(\la,nt)+1\mid 1\leq \ell\leq 2n\big\}\\
&\,=\big\{\lar{r}_k-k+\ell\mid 1\leq \ell\leq m_r(\la,nt)\big\}
\sqcup\big\{\lar{r}_k-k-\ell+1\mid 1\leq\ell\leq m_{t-r}(\la,nt)\big\}.\notag
\intertext{If we look at the complementary row corresponding to $t-r$, 
then a similar computation shows that the indices are}
&\big\{\lar{t-r}_k-k-\ell+m_{t-r}+1\mid 1\leq\ell\leq 2n\big\} 
\label{Eq_index-set2}\\
&\,=\big\{\lar{t-r}_k-k+\ell\mid 1\leq \ell\leq m_{t-r}(\la,nt)\big\}
\sqcup\big\{\lar{t-r}_k-k-\ell+1\mid 1\leq\ell\leq m_r(\la,nt)\big\}.\notag
\end{align}
\end{subequations}
We have now identified the entries which do not necessarily vanish under
$\varphi_t$.
These can be rearranged into a block-diagonal matrix. 
If $t$ is even, we move the submatrix corresponding to $t/2$ to the 
bottom-right $m_{t-1}(\la;nt)$ rows and $n$ columns, which picks up a sign of
\[
(-1)^{m_{t/2}(\la;nt)\sum_{r=(t+2)/2}^{t-1}m_r(\la;nt)+n^2(t-2)/2}.
\]
We then group the rows and columns corresponding to the residue classes 
$r$ and $t-r$ together with $0\leq r \leq \lfloor(t-1)/2\rfloor$ increasing.
The determinant is now block-diagonal and the blocks have dimension
$m_0(\la;nt)\times n$, $(m_r(\la;nt)+m_{t-r}(\la;nt))\times 2n$ 
for $1\leq r\leq \lfloor (t-1)/2\rfloor$ and, if $t$ is even, $m_{t/2}(\la;nt)
\times n$.
Since the determinant of a block-diagonal matrix vanishes if one of the
blocks is not a square, we can therefore conclude that $\varphi_t\o_\la$ 
vanishes unless the conditions \eqref{Eq_m_conditions} with $z=-1$ hold in 
Lemma~\ref{Lem_AK_orth}, i.e., unless $\tcore(\la)$ is orthogonal.
In this case the top-left $n\times n$ minor is equal to 
$\o_{\lar{0}}$ and if $t$ is even the bottom-right minor corresponds to
$\so_{\lar{t/2}}^-$.
Note that in this case the grouping of the $2n\times 2n$ minors does not 
change the sign of the determinant since each row and column is pushed 
past an even number of rows or columns.
For each $1\leq r\leq \lfloor(t-1)/2\rfloor$ these final minors are of the form
\[
\begin{pmatrix}
h_{\lar{r}_1+m_r-n} & \cdots & h_{\lar{r}_1+m_r-1}
& -h_{\lar{r}_1+m_r-n-1} & \cdots & -h_{\lar{r}_1+m_r-2n} \\
\vdots & & \vdots & \vdots & & \vdots \\
h_{\lar{r}_{m_r}+1-n} & \cdots & h_{\lar{r}_{m_r}}
& -h_{\lar{r}_{m_r}-n} & \cdots & -h_{\lar{r}_{m_r}-2n+1} \\
-h_{\lar{t-r}_1+m_{t-r}-n-1} & \cdots & -h_{\lar{t-r}_1+m_{t-r}-2n}
& h_{\lar{t-r}_1+m_{t-r}-n} & \cdots & h_{\lar{t-r}_1+m_{t-r}-1} \\
\vdots & & \vdots & \vdots & & \vdots \\
-h_{\lar{t-r}_{m_{t-r}}-n} & \cdots & -h_{\lar{t-r}_{m_{t-r}}-2n+1}
& h_{\lar{t-r}_{m_{t-r}}+1-n} & \cdots & h_{\lar{t-r}_{m_{t-r}}}
\end{pmatrix},
\]
where we write $m_r=m_r(\la;nt)$.
Clearing the negatives in this minor produces the sign 
$(-1)^{m_r(\la;nt)+n}$.
If $m_r(\la;nt)=m_{t-r}(\la;nt)=n$ then we are done. 
If $m_r(\la;nt)>n$ then we need to move the
columns $n+1$ to $m_r(\la;nt)$ so they are the first 
$m_r(\la;nt)-n$ columns, and then
reverse the order. This gives a sign of 
\[
(-1)^{n(m_r(\la;nt)-n)+\binom{m_r(\la;nt)-n}{2}}
=(-1)^{\binom{m_r(\la;nt)}{2}-\binom{n}{2}}.
\]
If $m_r(\la;nt)<n$ then we need to push the $m_{t-r}(\la;nt)-n$ missing rows
past the $n-m_r$ rows to their right and then reverse again, giving the same 
sign
\[
(-1)^{(m_{t-r}(\la;nt)-n)m_r(\la;nt)+\binom{m_{t-r}(\la;nt)-n}{2}}
=(-1)^{\binom{m_r(\la;nt)}{2}-\binom{n}{2}},
\]
since $m_{t-r}(\la;nt)-n=n-m_r(\la;nt)$.
In each of the three cases the resulting determinant is equal to 
$\rs_{\lar{r},\lar{t-r}}$.
Collecting all of the above determinant manipulations, the value of 
$\varepsilon^{\o}_{\la;nt}$ is
\begin{multline*}
\frac{(n+1)nt(t-1)}{4}+
\sum_{r=1}^{\lfloor(t-1)/2\rfloor}
\bigg(\binom{m_r+1}{2}+\binom{n+1}{2}\bigg)
\\+\begin{cases}
n\sum_{r=1}^{(t-2)/2} m_{t-r}+\frac{n^2(t-2)}{2} & 
\text{$t$ even}, \\
0 & \text{$t$ odd}.
\end{cases}
\end{multline*}
To see that this agrees with the sign of Ayyer and Kumari, we use 
\eqref{Eq_o-rk} together with the fact 
that for odd $t$, $(t+1)(t-1)n(n+1)/4$ is even and for even $t$,
$n(n+1)(t^2-2)/4$ has the same parity as $n(n+1)/2$.
The above exponent therefore has the same parity as
\[
\varepsilon^{\o}_{\la;nt}
=\sum_{r=\lfloor(t+2)/2\rfloor}^{t-1}\binom{m_r(\la;nt)+1}{2}
+\rk(\tcore(\la))+\begin{cases}
\binom{n+1}{2}+n\rk(\tcore(\la)) & \text{$t$ even}, \\
0 & \text{$t$ odd}.
\end{cases}
\]
This completes the proof.

\subsection{Proof of Theorem~\ref{Thm_AK_sp}}\label{Sec_Proof3}
It is of course possible to prove Theorem~\ref{Thm_AK_sp} by direct 
manipulation of the $h$ Jacobi--Trudi-type formula for $\sp_\la$
\eqref{Eq_sph}.
However, it will be more insightful to
begin with the $e$ Jacobi--Trudi-type formula \eqref{Eq_sp-e}
\[
\sp_{\la}=\det_{1\leq i,j\leq nt}\big(e_{\la_i'-i+j}-e_{\la_i'-i-j}\big),
\]
where we assume that $n$ is an integer such that $nt\geq \la_1$.
We further assume that $nt\geq l(\la)$, since, in the end, our sign 
will be independent of $n$.
The values of $\varphi_t h_r$ and $\varphi_t e_r$ differ by a sign of 
$(-1)^{(t-1)r/t}$, and the indices of the $e$'s in this formula are the same 
as the $h$'s in the formula for $\o_{\la'}$ (\eqref{Eq_oh} with 
$\la\mapsto\la'$), so we can simply replace each $h$ 
by a signed $e$ in the previous proof.
Moreover, by Corollary~\ref{Cor_reverse}, we know that the $t$-quotient of 
$\la'$ is simply the reverse of the $t$-quotient of $\la$.
We can therefore already claim that $\varphi_t \sp_\la$ vanishes unless
$\tcore(\la)$ is symplectic, in which case
\[
\varphi_t\sp_\la
=(-1)^{\delta} \sgn(w_t(\la';nt))
\sp_{\lar{t-1}}\prod_{i=0}^{\lfloor(t-3)/2\rfloor}\rs_{\lar{i},\lar{t-i-2}}
\times\begin{cases}
\so_{\lar{(t-2)/2}} & \text{$t$ even}, \\
1 & \text{$t$ odd},
\end{cases}
\]
where 
\begin{multline*}
\delta=(t-1)\sum_{r=0}^{t-1}\abs{\lar{r}}+
\sum_{r=\lfloor(t+1)/2\rfloor}^{t-1}\binom{m_r(\la';nt)}{2}
\\+\rk(\tcore(\la))+\begin{cases}
\binom{n+1}{2}+n\rk(\tcore(\la)) & \text{$t$ even}, \\
0& \text{$t$ odd}.
\end{cases}
\end{multline*}
All that remains now is to show that this sign agrees with that of 
Theorem~\ref{Thm_AK_sp}.
By a combination of Lemmas~\ref{Lem_signs1} and \ref{Lem_signs2} we may replace
$\sgn(w_t(\la';nt))$ by $\sgn(w_t(\la;nt))$, which cancels 
$\rk(\tcore(\la))+(t-1)\sum_{r=0}^{t-1}\abs{\lar{r}}$ in $\delta$.
If we call this new exponent $\delta'$, then we
also have by Lemma~\ref{Lem_reverse} that $m_r(\la';nt)=m_{r-1}(\la;nt)$
for $\lfloor(t+1)/2\rfloor\leq r\leq t-1$, which implies
$\delta'=\varepsilon_{\la;nt}^{\sp}$.

\subsection{Proof of Theorem~\ref{Thm_AK_so}}
The final proof closely follows the first. 
Let $\la$ be a partition of length at most $nt$ and consider
\[
\mathrm{so}_\la=\det_{1\leq i,j\leq nt}\big(h_{\la_i-i+j}+h_{\la_i-i-j+1}\big).
\]
As before we apply the permutations $w_t(\la;nt)$ and $w_t(0;nt)$ to the 
rows and columns of this determinant, introducing the sign 
\eqref{Eq_first-sign}.
Unlike before, there is only one case in which both $h$'s may survive. 
If $t$ is odd and $\la_i-i\equiv-j\equiv (t-1)/2\Mod{t}$ then we have
\begin{multline*}
\varphi_t\big(h_{\la_i-i+j}+h_{\la_i-i-j+1}\big)
\\[1mm]=h_{\lar{(t-1)/2}_k-k+\ell+m_{(t-1)/2}(\la;nt)-n}
+h_{\lar{(t-1)/2}_k-k-\ell+1+m_{(t-1)/2}(\la;nt)-n}.
\end{multline*}
where $1\leq k\leq m_{(t-1)/2}(\la;nt)$ and $1\leq\ell\leq n$.
These entries lie in the rows $1+\sum_{r=0}^{(t-3)/2}m_r(\la;nt)$ to
$\sum_{r=0}^{(t-1)/2}m_r(\la;nt)$ and columns
$1+n(t-1)/2$ to $n(t+1)/2$, and outside of their intersection, all other entries
in these rows and columns are zero.
Now consider a row corresponding to the residue class $r$ for 
$0\leq r\leq \lfloor(t-2)/2\rfloor$.
Then the column must fall into the residue class $r$ or $t-r-1$ in order for
the entry to not necessarily vanish.
In this case we now have
\[
\varphi_t\big(h_{\la_i-i+j}+h_{\la_i-i-j+1}\big)
=\begin{cases}
h_{\lar{r}_k-k+\ell_1+m_r(\la;nt)-n} & \text{if $j\equiv -r\Mod{t}$}, \\
h_{\lar{r}_k-k-\ell_1+m_r(\la;nt)-n+1} & \text{if $j\equiv r+1\Mod{t}$}. \\
\end{cases}
\]
Again, a similar computation holds for the row corresponding to $t-r-1$,
and the sets of indices agree with \eqref{Eq_index-sets} but with 
$t-r\mapsto t-r-1$ in \eqref{Eq_index-set2}.
If $t$ is odd we move the central submatrix corresponding to $(t-1)/2$ to
the top-left, picking up a sign of
\[
(-1)^{m_{(t-1)/2}(\la;nt)\sum_{r=0}^{(t-3)/2}m_r(\la;nt)+n^2(t-1)/2}.
\]
The grouping and rearrangement of the remaining minors is the same as in the
first proof above. We only remark that the result is the determinant of a 
block-diagonal matrix with blocks of dimensions 
$(m_r(\la;nt)+m_{t-r-1}(\la;nt))\times 2n$ for 
$0\leq r\leq \lfloor(t-2)/2\rfloor$ plus one of size
$m_{(t-1)/2}(\la;nt)\times n$ if $t$ is odd.
Thus the determinant vanishes unless 
\eqref{Eq_m_conditions} holds with $z=0$, i.e., unless $\tcore(\la)$ is 
self-conjugate.
Accounting for the sign of $(-1)^{\binom{m_r(\la;nt)}{2}+\binom{n}{2}}$ 
from reordering the columns in the copies of $\rs_{\lar{r},\lar{t-r-1}}$,
the exponent $\varepsilon^{\so}_{\la;nt}$ has the value
\[
\frac{(n+1)nt(t-1)}{4}
+\sum_{r=0}^{\lfloor(t-2)/2\rfloor}\bigg(\binom{m_r}{2}+\binom{n}{2}\bigg)
+\begin{cases}
0 & \text{$t$ even},\\
n\sum_{r=0}^{(t-3)/2}m_r+n^2(t-1)/2 & \text{$t$ odd}.
\end{cases}
\]
By \eqref{Eq_so-rk} this has the same parity as
\[
\varepsilon^{\so}_{\la;nt}=
\sum_{r=\lfloor (t+1)/2\rfloor}^{t-1}\binom{m_r(\la;nt)+1}{2}+
\begin{cases}
0 & \text{$t$ even}, \\
n\rk(\tcore(\la)) & \text{$t$ odd}.
\end{cases}
\]

\subsection{Proof of Proposition~\ref{Prop_signs}}
To close out this section, we sketch the proof of Proposition~\ref{Prop_signs}.
In the Schur case, by Corollary~\ref{Cor_reverse} and the fact that 
$\la/\mu$ is $t$-tileable if and only if $\la'/\mu'$ is, we already have
$\omega\varphi_t s_{\la/\mu}=\pm\varphi_t\omega s_{\la/\mu}$.
The precise difference in sign is then provided by Lemma~\ref{Lem_signs1}.
Again using Corollary~\ref{Cor_reverse}, we have
\begin{align*}
\omega\varphi_t\so_\la
&=(-1)^{\varepsilon^{\so}_{\la;nt}} \sgn(w_t(\la;nt))
\prod_{r=0}^{\lfloor(t-2)/2\rfloor}
\!\rs_{(\lar{r})',(\lar{t-r-1})'}
\!\times\!\begin{cases}
1 & \text{$t$ even}, \\
\so_{(\lar{(t-1)/2})'} & \text{$t$ odd},
\end{cases}\\
&=(-1)^{\varepsilon^{\so}_{\la;nt}+\varepsilon^{\so}_{\la';nt}}
\sgn(w_t(\la;nt))\sgn(w_t(\la';nt))
\varphi_t\so_{\la'},
\end{align*}
where $n$ should be large enough so that $\la$ is contained in an $nt\times nt$
box.
Combining Lemmas~\ref{Lem_signs0} and \ref{Lem_signs1} shows that, in this case,
\[
\sgn(w_t(\la;nt))\sgn(w_t(\la';nt))=
(-1)^{(t-1)(\abs{\lar{0}}+\cdots+\abs{\lar{t-1}})}.
\]
Moreover, $\varepsilon_{\la;nt}^{\so}=\varepsilon_{\la';nt}^{\so}$, so
that the total sign agrees with the claim.

\section{Other factorisations}\label{Sec_stuff}
\subsection{Littlewood-type factorisations}
In \cite[\S7.3]{Littlewood40}, Littlewood proves a factorisation slightly
more general than the one contained in Theorem~\ref{Thm_skew} for $\mu$ 
empty; see also \cite[Theorem~2.7]{AK22}.
Here, and below, we let $\lar{r}=\lar{k}$ if $k\equiv r\Mod{t}$.
\begin{theorem}\label{Thm_extra-y}
Let $\la$ be a partition of length at most $nt+1$ and 
$X=(x_1,\dots,x_n)$ a set of variables. Then for another variable $q$,
\[
s_\la(X,\zeta X,\dots,\zeta^{t-1}X,q)=0
\]
unless $\tcore(\la)=(c)$ for some $0\leq c\leq t-1$, in which case
\[
s_\la(X,\zeta X,\dots,\zeta^{t-1}X,q)=
\sgn_t(\la/(c))q^cs_{\lar{c-1}}(X^t,q^t)\prod_{\substack{r=0\\r\neq c-1}}^{t-1}
s_{\lar{r}}(X^t).
\]
\end{theorem}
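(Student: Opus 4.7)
The plan is to mimic the Jacobi--Trudi approach used for Theorem~\ref{Thm_skew}, with a few extra bookkeeping steps to accommodate the variable $q$. First I would establish the identity
\[
h_r(X, \zeta X, \ldots, \zeta^{t-1}X, q) = q^{r \bmod t}\, h_{\lfloor r/t \rfloor}(X^t, q^t),
\]
which follows from the generating function computation
\[
H_z(X, \zeta X, \ldots, \zeta^{t-1}X, q) = \frac{H_{z^t}(X^t)}{1-zq} = H_{z^t}(X^t, q^t) \sum_{k=0}^{t-1}(zq)^k
\]
upon extracting the coefficient of $z^r$. Substituting this into $s_\la = \det_{1 \le i, j \le N}(h_{\la_i - i + j})$ with $N = nt + 1$ produces a matrix whose $(i,j)$ entry is $q^{(\la_i - i + j) \bmod t}\, h_{\lfloor (\la_i - i + j)/t \rfloor}(X^t, q^t)$.

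Next, following Macdonald, I would permute rows by $w_t(\la; N)$ and columns by $w_t(\emptyset; N)$ to group indices by residue class modulo~$t$. The resulting matrix acquires a $t \times t$ block structure: row block $r$ has size $m_r(\la; N)$ and column block $s$ has size $n + [s = 0]$, where the extra column in class~$0$ is the only effect of $N$ not being a multiple of $t$. All entries of block $(r, s)$ share the common prefactor $q^{(r - s) \bmod t}$, and Lemma~\ref{Lem_phi} identifies the remaining $h$-indices with the $t$-quotient data of $\la$.

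When $\tcore(\la) = (c)$ the row-class multiplicities $(m_r(\la; N))_r$ equal $(n,\ldots,n,n+1,n,\ldots,n)$ with the $n+1$ in position $c$. The only way to match all rows to columns so that the bulk of pairs share a residue class (and hence contribute $q^0$) is to pair each row of class $r \ne 0, c$ with the columns of class $r$, and to pair the single extra row of class $c$ with the single extra column of class $0$, which costs a single $q^c$. The square blocks at positions $(r, r)$ for $r \ne 0, c$ reduce by Lemma~\ref{Lem_phi} to the Jacobi--Trudi determinants $s_{\lar{r}}(X^t)$, while the combined block involving row classes $\{0, c\}$ and column classes $\{0, c\}$ assembles, after extracting the $q^c$ and reordering, into the Jacobi--Trudi determinant for $s_{\lar{c-1}}(X^t, q^t)$. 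If $\tcore(\la)$ is not a single row $(c)$, no such matching exists and the determinant vanishes. The sign reconciliation uses Lemma~\ref{Lem_signs0} applied with $\mu = (c)$ together with an auxiliary calculation comparing $w_t(\emptyset; N)$ with $w_t((c); N)$.

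The main obstacle is verifying that the combined block for row/column classes $\{0,c\}$ really assembles into $s_{\lar{c-1}}(X^t, q^t)$---in particular, that the index shift $c \mapsto c-1$ is correct, which reflects the fact that the "extra" bead in runner $c$ of $\la$'s abacus is re-routed through the bottom of runner $c-1$ (in the empty-core configuration) before landing in runner $0$. An alternative, perhaps cleaner route is via the single-variable branching rule $s_\la(Y, q) = \sum_{\mu} q^{|\la/\mu|} s_\mu(Y)$ (sum over $\mu \subseteq \la$ with $\la/\mu$ a horizontal strip) applied with $Y = (X, \zeta X, \ldots, \zeta^{t-1}X)$: Theorem~\ref{Thm_skew} reduces the right-hand side to a sum over $\mu$ with $\tcore(\mu) = \emptyset$, an abacus argument identifies these $\mu$ with horizontal strips in $\lar{c-1}$, and a second application of the single-variable branching rule in the alphabet $(X^t, q^t)$ reassembles the sum into $s_{\lar{c-1}}(X^t, q^t)$.
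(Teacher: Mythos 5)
The primary Jacobi--Trudi route in your proposal has a genuine gap at the vanishing step. After you permute rows and columns, each block $(r,s)$ does carry a constant prefactor $q^{(\rho_r+\sigma_s)\bmod t}$ times a complete-homogeneous submatrix, but the off-diagonal blocks are \emph{not} zero (unlike the $\varphi_t$ case, where entries with mismatched residues literally vanish). The determinant is a sum over all perfect matchings of rows with columns, including matchings that mismatch residue classes and contribute powers of $q$ strictly above $q^c$ --- indeed the only constraint is that the total $q$-exponent from the prefactors is $\equiv\abs{\la}\pmod t$. Your claim that ``the only way to match all rows to columns so that the bulk of pairs share a residue class'' therefore does not settle the determinant: when $\tcore(\la)$ has length $>1$ the vanishing is caused by cancellation among the nonzero cross-class terms, not by a missing block, and your argument does not explain why those cancellations happen. (For $\la=(2,1)$, $t=2$ one can check directly that after the block reordering there are several nonvanishing matchings whose contributions cancel.) The factorisation in the $\tcore(\la)=(c)$ case has the same problem; nothing in the block picture forces the determinant to be a product of the block determinants when off-diagonal blocks are present.

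Your alternative route via the single-variable branching rule is in spirit the paper's proof, but the paper uses a slightly different and notably cleaner form of the branching: it writes $\varphi_t^qs_\la=\sum_{k\geq0}q^k\varphi_ts_{\la/(k)}$, keeping the outer shape $\la$ fixed and peeling off a single row $(k)$, and then applies Theorem~\ref{Thm_skew} to the \emph{skew} Schur function $s_{\la/(k)}$. Since $\varphi_t s_{\la/(k)}$ vanishes unless $\tcore(\la)=\tcore((k))$, and $(k)$ is a single row, every term vanishes the moment $l(\tcore(\la))>1$; this gives the vanishing direction for free. Your version instead applies Theorem~\ref{Thm_skew} to the straight Schur functions $s_\mu$ over horizontal strips $\mu\prec\la$ and keeps those with $\tcore(\mu)=\emptyset$. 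That set is generally nonempty even when $\tcore(\la)$ has length $>1$ (again $\la=(2,1)$, $t=2$: both $\mu=(2)$ and $\mu=(1,1)$ have empty $2$-core and their contributions cancel), so you still need a separate cancellation argument for the vanishing and a nontrivial abacus bijection for the factorisation, neither of which you spell out. I would recommend reorganising the argument along the paper's lines: branch into $\sum_kq^k\varphi_ts_{\la/(k)}$, use Lemma~\ref{Lem_tilable} and the computation of the $t$-quotient of a single row $(\ell t+c)$ to isolate the surviving $k$, and resum via the branching rule in the alphabet $(X^t,q^t)$.
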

This theorem can also be placed in our framework, however in a somewhat less
elegant manner than our other results.
The operator 
$\varphi_t^q:\La\longrightarrow \La\otimes_{\mathbb{Z}}\mathbb{Z}[q]$ 
which gives the above may be defined by
\[
\varphi_t^q h_{at+b}:=q^{b}\sum_{k\geq 0}q^{kt} h_{a-k}
=\sum_{k\geq0}q^k\varphi_t h_{at+b-k}.
\]
Note that the sums are finite since $h_r$ vanishes for negative $r$, and that
for $q=0$ this reduces to the operator $\varphi_t$ from the earlier sections.
Alternatively, since 
\[
h_r(X,q)=\sum_{k\geq0}q^k h_{r-k}(X),
\]
the image of $\varphi_t^q$ acting on any symmetric function $f$ is the same as
the image of $\varphi_t$ acting on $f(X,q)$, where $\varphi_t$ acts only 
on the $X$ variables.
Using $\varphi_t^q$, 
Littlewood's above theorem may be phrased as follows.
After the statement we provide a short proof which relies only
on Theorem~\ref{Thm_skew}.
\begin{prop}
We have that $\varphi_t^q s_\la=0$ unless $\tcore(\la)=(c)$ for some 
$c$ such that $0\leq c\leq t-1$, in which case
\[
\varphi_t^q s_\la=
\sgn_t(\la/(c))q^c\prod_{\substack{r=0\\r\neq c-1}}^{t-1}s_{\lar{r}}
\sum_{k\geq0}q^{kt}s_{\lar{c-1}/(k)}.
\]
\end{prop}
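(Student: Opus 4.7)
The plan is to decompose $s_\la(X,q)$ via the one-variable branching rule and then apply $\varphi_t$ term-by-term, reducing everything to Theorem~\ref{Thm_skew}. Since $s_\mu$ evaluated in the single variable $q$ vanishes unless $\mu$ has at most one part, specialising the usual addition formula $s_\la(X,Y)=\sum_\mu s_\mu(Y)s_{\la/\mu}(X)$ at $Y=(q)$ gives
\[
s_\la(X,q)=\sum_{k\geq 0}q^k s_{\la/(k)}(X).
\]
Using the identity $\varphi_t^q f=\varphi_t\bigl(f(X,q)\bigr)$ noted just before the proposition statement, this yields
\[
\varphi_t^q s_\la = \sum_{k\geq 0}q^k\,\varphi_t s_{\la/(k)}.
\]

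The next step is to compute the $t$-core and $t$-quotient of the one-row partition $(k)$. Writing $k=at+b$ with $0\leq b\leq t-1$, a short abacus calculation (with $N$ a sufficiently large multiple of $t$, the bead at position $N-1$ is displaced to $N-1+k$) shows that $\tcore((k))=(b)$ and that the $t$-quotient of $(k)$ has $(a)$ in position $b-1 \pmod t$ and empty entries elsewhere. By Lemma~\ref{Lem_tilable}, $\la/(k)$ is therefore $t$-tileable precisely when $\tcore(\la)=(b)$ and $(a)\subseteq\lar{b-1}$. Since $\tcore((k))$ is always a one-row partition of length at most $t-1$, no term contributes unless $\tcore(\la)=(c)$ for some $0\leq c\leq t-1$, proving the vanishing statement.

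Assuming $\tcore(\la)=(c)$, the only surviving indices are $k=at+c$ with $a\geq 0$, and Theorem~\ref{Thm_skew} gives
\[
\varphi_t s_{\la/(at+c)}=\sgn_t(\la/(at+c))\,s_{\lar{c-1}/(a)}
\prod_{\substack{r=0\\r\neq c-1}}^{t-1}s_{\lar{r}},
\]
where the bound $a\leq\lar{c-1}_1$ is enforced automatically by the vanishing of $s_{\lar{c-1}/(a)}$. To finish, one shows that the sign is independent of $a$: the skew shape $(at+c)/(c)$ is a single row of $at$ boxes, hence tileable by $a$ horizontal $t$-ribbons of height $0$. Concatenating any ribbon decomposition of $\la/(at+c)$ with this one produces a ribbon decomposition of $\la/(c)$ of the same total height, so $\sgn_t(\la/(at+c))=\sgn_t(\la/(c))$. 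Factoring out the $a$-independent pieces from the sum then gives the claimed identity.

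The only subtle point, and the closest thing to an obstacle, is the cyclic indexing when $c=0$: the ``position $c-1$'' must there be read as $t-1$, which is exactly the convention $\lar{r}=\lar{k}$ for $k\equiv r\pmod t$ fixed just before the statement of Theorem~\ref{Thm_extra-y}.
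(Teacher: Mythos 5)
Your proposal is correct and follows essentially the same route as the paper: expand via the one-variable branching rule, observe that $\tcore((k))$ is a one-row partition of length at most $t-1$ so only indices $k\equiv c\pmod t$ survive, apply Theorem~\ref{Thm_skew} to each term, and note that the sign is independent of $a$ because $(at+c)/(c)$ is tiled by $a$ height-zero ribbons. You spell out the abacus computation of $\tcore((k))$ and the concatenation argument for the sign in slightly more detail than the paper, but the argument is the same.
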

\begin{proof}
The first observation is that 
\[
\varphi_t^qs_\la=\sum_{k\geq 0} q^k\varphi_ts_{\la/(k)},
\]
which is a simple consequence of the branching rule for Schur functions
\cite[p.~72]{Macdonald95}.
In the case that $l(\tcore(\la))>1$ 
then each term in the sum on the right-hand side vanishes
by Theorem~\ref{Thm_skew} as the $t$-cores of the inner and outer shape
can never be equal.
Now assume $\tcore(\la)=(c)$ for some $0\leq c\leq t-1$, which is a complete
set of $t$-cores with length one.
Then the nonzero terms in the sum on the right-hand side are those for
which $k$ is of the form $\ell t+c$ with $\ell\geq0$ and $\ell t+c\leq 
\la_1$.
Therefore 
\[
\varphi_t^qs_\la=q^{c}\sum_{\ell\geq0}q^{\ell t}
\varphi_ts_{\la/(\ell t+c)}
=q^c\sum_{\ell\geq0}\sgn_t(\la/(\ell t+c))q^{\ell t}s_{\lar{c-1}/(\ell)}
\prod_{\substack{r=0\\r\neq c-1}}^{t-1}s_{\lar{r}},
\]
again by Theorem~\ref{Thm_skew}. By our convention we always
compute the $t$-quotient using a beta set with number of elements a multiple 
of $t$. This means that the single row $(\ell t+c)$ has one non-empty
element in its $t$-quotient, $\lar{c-1}$.
Moreover, since the partitions $(\ell t+c)$ all differ by a ribbon of
height zero, the sign of each term in the sum is the same and equal to
$\sgn_t(\la/(c))$.
Putting all of this together, we arrive at
\[
\varphi_t^qs_\la=
\sgn_t(\la/(c))q^c\prod_{\substack{r=0\\r\neq c-1}}^{t-1}s_{\lar{r}}
\sum_{\ell\geq0}q^{\ell t}s_{\lar{c-1}/(\ell)}.\qedhere
\]
\end{proof}

We do not see, at this stage, whether it is possible to extend the previous
result to skew Schur functions.
If we expand $\varphi_t^q s_{\la/\mu}$ as in the proof above, we find that
\[
\varphi_t^q s_{\la/\mu}
=\sum_{\nu\succ\mu}q^{\abs{\nu}-\abs{\mu}}\varphi_t s_{\la/\nu},
\]
where $\nu\succ\mu$ means that $\nu/\mu$ is a \emph{horizontal strip},
i.e., $\nu\supseteq\mu$ and $\nu/\mu$ contains at most one box in each column 
of its Young diagram.
Of course, this implies that $\varphi_t^qs_{\la/\mu}=0$ if there does not
exist a $\nu$ such that $\nu\succ\mu$ and $\la/\nu$ is $t$-tileable.
However, the sum may vanish even if such a $\nu$ exists.
For example,
\begin{align*}
\varphi_2^qs_{(4,4)/(1)}&=q\varphi_t\big(s_{(4,4)/(2)}+s_{(4,4)/(1,1)}\big)
+q^3\varphi_t\big(s_{(4,4)/(4)}+s_{(4,4)/(3,1)}\big) \\
&=q(s_{(2)}s_{(2)/(1)}-s_{(2)}s_{(2)/(1)})
+q^3(s_{(2)}-s_{(2)}) \\
&=0.
\end{align*}

In a similar direction Pfannerer \cite[Theorem~4.4]{Pfannerer22} has shown 
that, if $\la$ has empty $t$-core and $m=\ell t+k$ is any integer, then the 
Schur function $s_\la(1,\zeta,\dots,\zeta^{m-1})$ always factors as a product 
of Schur functions with variables all one indexed by the $t$-quotient of $\la$.
When $m$ is a multiple of $t$ this becomes a special case of Littlewood's 
theorem (Theorem \ref{Thm_skew} with $\mu$ empty) noted by Reiner, Stanton and 
White \cite[Theorem~4.3]{RSW04}.
Pfannerer's result has subsequently been generalised by Kumari 
\cite[Theorem~2.2]{Kumari22b}, in addition to analogues of 
Theorem~\ref{Thm_extra-y} for other classical group characters.
It is an open problem to see how these factorisations fit into our story.

\subsection{Factorisations of supersymmetric Schur functions}
Recently, Kumari has given a version of Theorem~\ref{Thm_skew} for the
so-called \emph{skew hook Schur functions} (or
\emph{supersymmetric skew Schur functions}) \cite[Theorem~3.2]{Kumari22}.
For two independent sets of variables (\emph{alphabets}), we denote their
plethystic difference by $X-Y$; see, e.g., 
\cite{Haglund08,Lascoux03} for the necessary background on plethystic 
notation. We also note that for an alphabet $X$, we let $\varepsilon X$ be
the alphabet with all variables negated.
The \emph{complete homogeneous supersymmetric function} used in
\cite{Kumari22} may be defined as\[
\sum_{j=0}^r h_j(X)e_{r-j}(Y)=h_r[X-\varepsilon Y].
\]
The hook Schur function is then the Jacobi--Trudi determinant of these 
functions, so that
\[
s_{\la/\mu}[X-\varepsilon Y]=\det_{1\leq i,j\leq n}\big(
h_{\la_i-\mu_j-i+j}[X-\varepsilon Y]\big).
\]
From this, it follows readily that Kumari's factorisation for the
hook Schur functions is contained in Theorem~\ref{Thm_skew} above at the
alphabet $X-\varepsilon Y$.

\subsection{Factorisations of $\rs_{\la,\mu}$}\label{Sec_CK}
To close, we point out that the universal character $\rs_{\la,\mu}$ can be
used to lift some factorisation results, discovered by Ciucu and Krattenthaler
\cite[Theorems~3.1--3.2]{CK09} and subsequently generalised by
Ayyer and Behrend \cite[Theorems~1--2]{AB19}, to the universal character level.
In the next result we write $\la+1^n=(\la_1+1,\dots,\la_n+1)$ 
where $n\geq l(\la)$.
\begin{theorem}\label{Thm_CK}
For $\la$ a partition of length at most $n$, there holds
\begin{subequations}\label{Eq_CK}
\begin{align}\label{Eq_CK1}
\rs_{\la,\la}&=\so_\la\so_\la^-,
\intertext{and}
\rs_{\la+1^n,\la}&=\o_{\la+1^n}\sp_\la.\label{Eq_CK2}
\end{align}
\end{subequations}
Moreover, for $\la$ a partition of length at most $n+1$,
\begin{subequations}\label{Eq_CK'}
\begin{align}\label{Eq_CK3}
\rs_{(\la_1,\dots,\la_n),(\la_2,\dots,\la_{n+1})}
+&\rs_{(\la_1-1,\dots,\la_{n+1}-1),(\la_2+1,\dots,\la_n+1)}\\\notag
&\qquad\qquad\quad
=\sp_{(\la_1,\dots,\la_n)}\o_{(\la_2,\dots,\la_{n+1})},
\intertext{and}
\label{Eq_CK4}
\rs_{(\la_1+1,\dots,\la_n+1),(\la_2,\dots,\la_{n+1})}
+&\rs_{(\la_1,\dots,\la_{n+1}),(\la_2+1,\dots,\la_n+1)}\\\notag
&\qquad\qquad\quad
=\so_{(\la_1+1,\dots,\la_n+1)}\so_{(\la_2,\dots,\la_{n+1})}^-.
\end{align}
\end{subequations}
\end{theorem}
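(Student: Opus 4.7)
The plan is to derive all four identities via direct manipulation of the block Jacobi--Trudi formula \eqref{Eq_rs-h} for $\rs_{\la,\mu}$.

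For \eqref{Eq_CK1}, specialising \eqref{Eq_rs-h} to $\mu=\la$ with $n=m$ yields a $2n\times 2n$ determinant of the doubly block-circulant form $\det\bigl(\begin{smallmatrix}A & B\\ B & A\end{smallmatrix}\bigr)$, where $A_{ij}=h_{\la_i-i+j}$ and $B_{ij}=h_{\la_i-i-j+1}$. The classical block identity $\det\bigl(\begin{smallmatrix}A & B\\ B & A\end{smallmatrix}\bigr)=\det(A+B)\det(A-B)$ (easily verified by conjugating with $\tfrac{1}{\sqrt{2}}\bigl(\begin{smallmatrix}I & I\\ I & -I\end{smallmatrix}\bigr)$, or equivalently by the row operations $R_i\mapsto R_i+R_i'$ followed by $R_i'\mapsto R_i'-\tfrac{1}{2}R_i$) then suffices, since $A+B$ and $A-B$ are exactly the Jacobi--Trudi matrices appearing in the definitions of $\so_\la$ and $\so_\la^-$.

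For \eqref{Eq_CK2}, write $\rs_{\la+1^n,\la}=\det\bigl(\begin{smallmatrix}A' & B'\\ C & D\end{smallmatrix}\bigr)$ with $A'_{ij}=h_{\la_i-i+j+1}$, $B'_{ij}=h_{\la_i-i-j+2}$, $C_{ij}=h_{\la_i-i-j+1}$, $D_{ij}=h_{\la_i-i+j}$. Two structural observations drive the calculation: $B'_{i,1}=D_{i,1}=h_{\la_i-i+1}$ for every $i$, and $(D-B')_{ij}=-(C-A')_{i,j-1}$ for $j\geq 2$. The determinant-preserving row operation $R_i'\mapsto R_i'-R_i$ transforms the bottom-left block into $C-A'$, the negative of the $\o_{\la+1^n}$-matrix, while making the first column of the bottom-right block vanish. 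The subsequent column operations $C_{n+j}\mapsto C_{n+j}+C_{j-1}$ for $j=2,\dots,n$ annihilate the remaining entries of the bottom-right block (via the second observation), while simultaneously converting the top-right block into a matrix whose first column equals half the first column of the $\sp_\la$-matrix and whose remaining columns coincide with those of the $\sp_\la$-matrix. The resulting block anti-diagonal matrix evaluates, using the block-triangular determinant formula together with the normalisation in \eqref{Eq_sph}, to $(-1)^{n^2+n}\o_{\la+1^n}\sp_\la=\o_{\la+1^n}\sp_\la$, as required.

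For \eqref{Eq_CK3} and \eqref{Eq_CK4}, the left-hand side is a sum of two $2n\times 2n$ $\rs$-determinants (with block decompositions $(n,n)$ and $(n+1,n-1)$ respectively), while the right-hand side is a product of two $n\times n$ Jacobi--Trudi-type determinants. To realise such a sum-equals-product identity I would introduce a $(2n+1)\times(2n+1)$ auxiliary matrix $N$ assembled from the block Jacobi--Trudi entries for the partition pair $\bigl((\la_1,\dots,\la_{n+1}),(\la_2,\dots,\la_{n+1})\bigr)$ with block sizes $n+1$ and $n$, and argue that (i) a Laplace expansion of $\det N$ along a judiciously chosen row produces exactly the two $\rs$-summands of the statement with the correct signs, and (ii) a parallel sequence of row and column operations in the style of \eqref{Eq_CK2} (for \eqref{Eq_CK3}) or \eqref{Eq_CK1} (for \eqref{Eq_CK4}) factors $\det N$ as the target product $\sp\cdot\o$ or $\so\cdot\so^-$.

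The hard part will be pinning down the auxiliary matrix $N$ precisely, so that both evaluations match the right shifts $\la\mapsto\la\pm 1^k$ appearing in each $\rs$-summand and the aggregated signs agree. Once $N$ has the correct form, the verification should reduce to careful book-keeping along the lines of the arguments sketched for \eqref{Eq_CK1} and \eqref{Eq_CK2}.
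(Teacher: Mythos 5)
Your arguments for \eqref{Eq_CK1} and \eqref{Eq_CK2} are correct and are, up to the order in which the row and column operations are applied, the same computations the paper performs. For \eqref{Eq_CK1} the paper adds the right blocks to the left and subtracts the top from the bottom, which is exactly the classical block identity you invoke. For \eqref{Eq_CK2} the paper does column operations first and then subtracts the bottom rows from the top, whereas you do the row operations first; both land on the same block-triangular matrix. (One small slip: after your operations the top-left block $A'$ survives, so the matrix is block anti-\emph{triangular}, not anti-diagonal; your sign bookkeeping $(-1)^{n^2+n}=1$ is still right.)

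For \eqref{Eq_CK3} and \eqref{Eq_CK4}, however, you have not given a proof. What you write is a programme: introduce some unspecified $(2n+1)\times(2n+1)$ matrix $N$, Laplace-expand along a row to recover the two $\rs$-summands, and then manipulate $\det N$ into the product. You acknowledge at the end that the ``hard part'' --- actually constructing $N$ and verifying that the Laplace expansion produces exactly the two summands with the right shifts and signs --- is left undone. That hard part \emph{is} the proof; as written this is a plan, not an argument, and there is no guarantee that a matrix $N$ of that size and shape exists. Note also that each $\rs$-summand in \eqref{Eq_CK3}--\eqref{Eq_CK4} is already a $2n\times 2n$ determinant, so it is not obvious why a $(2n+1)\times(2n+1)$ ambient matrix is the natural choice. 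The paper proceeds more directly: starting from the \emph{second} $\rs$-determinant (with block sizes $n+1$ and $n-1$), it permutes one column and one row so that the result agrees with the first $\rs$-determinant (block sizes $n$ and $n$) in every row except the last. The sum of the two determinants is then, by multilinearity in that single row, a \emph{single} $2n\times 2n$ determinant, which is subsequently factored by block column and row operations exactly as in \eqref{Eq_CK1}--\eqref{Eq_CK2}. That realignment-plus-multilinearity step is the missing idea in your sketch, and without it \eqref{Eq_CK3} and \eqref{Eq_CK4} remain unproved.
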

To get back to the results of Ayyer and Behrend one simply evaluates
both sides of each equation at the alphabet $(x_1^\pm,\dots,x_n^\pm)$.
The precise forms present in \cite[Equations~(18)--(21)]{AB19} then follow 
from \eqref{Eq_det-rep}.\footnote{The factor of $(1+\delta_{0,\la_{n+1}})$
in \cite[Equation~(20)]{AB19} is not present in our generalisation 
\eqref{Eq_CK3} since the second character vanishes if $\la_{n+1}=0$.}
As identities for Laurent polynomials the pairs of identities 
\eqref{Eq_CK} and \eqref{Eq_CK'} admit uniform statements.
However no such uniform statement will exist for the above generalisation,
since this requires characters indexed by half-partitions, which cannot be
handled by the universal characters.
Ayyer and Fischer \cite{AF20} have also given skew analogues of the 
non-universal case of Theorem~\ref{Thm_CK}.
Jacobi--Trudi formulae for the symplectic and orthogonal characters have
recently been derived in \cite{AFHS23,JLW22}, and so there are candidates
for the universal characters for those objects.
However, the main obstacle in lifting Ayyer and Fischer's results to the 
universal level is the lack of a skew analogue of $\rs_{\la,\mu}$.

\begin{proof}[Proof of Theorem~\ref{Thm_CK}]
First up is \eqref{Eq_CK1}, which is the simplest of the four. 
In the determinant
\[
\rs_{\la,\la}=
\det_{1\leq i,j\leq 2n}
\begin{pmatrix}
\big(h_{\la_i-i+j}\big)_{1\leq i,j\leq n} & 
\big(h_{\la_i-i-j+1}\big)_{\substack{1\leq i,j\leq n}} \\[2mm]
\big(h_{\la_i-i-j+1}\big)_{\substack{1\leq i,j\leq n}} &
\big(h_{\la_i-i+j}\big)_{1\leq i,j\leq n}
\end{pmatrix},
\]
add the blocks on the right to the blocks on the left, and then subtract the
blocks on the top from the blocks on the bottom, giving
\begin{align*}
\rs_{\la,\la}&=
\det_{1\leq i,j\leq 2n}
\begin{pmatrix}
\big(h_{\la_i-i+j}+h_{\la_i-i-j+1}\big)_{1\leq i,j\leq n} & 
\big(h_{\la_i-i-j+1}\big)_{\substack{1\leq i,j\leq n}} \\[2mm]
0 &
\big(h_{\la_i-i+j}-h_{\la_i-i-j+1}\big)_{1\leq i,j\leq n}
\end{pmatrix} \\
&=\so_\la\so_\la^-.
\end{align*}
For the second identity \eqref{Eq_CK2},
\[
\rs_{\la+1^n,\la}=
\det_{1\leq i,j\leq 2n}
\begin{pmatrix}
\big(h_{\la_i-i+j+1}\big)_{1\leq i,j\leq n} & 
\big(h_{\la_i-i-j+2}\big)_{\substack{1\leq i,j\leq n}} \\[2mm]
\big(h_{\la-i-j+1}\big)_{\substack{1\leq i,j\leq n}} &
\big(h_{\la_i-i+j}\big)_{1\leq i,j\leq n}
\end{pmatrix},
\]
and we add columns $1, \dots, n-1$ to the columns $n+2,\dots,2n$
and then subtract the bottom two blocks from the top two, resulting in
\begin{align*}
&\rs_{\la+1^n,\la}\\
&\quad=
\frac{1}{2}\det_{1\leq i,j\leq 2n}
\begin{pmatrix}
\big(h_{\la_i-i+j+1}-h_{\la_i-i-j+1}\big)_{1\leq i,j\leq n} & 
0 \\
\big(h_{\la-i-j+1}\big)_{\substack{1\leq i,j\leq n}} &
\big(h_{\la_i-i+j}+h_{\la_i-i-j+2}\big)_{1\leq i,j\leq n}
\end{pmatrix} \\[2mm]
&\quad=\o_{\la+1^n}\sp_\la.
\end{align*}
In the third identity, we consider the second determinant in the sum
in \eqref{Eq_CK3}
\begin{align*}
\det_{1\leq i,j\leq 2n}
\begin{pmatrix}
\big(h_{\la_i-i+j-1}\big)_{1\leq i,j\leq n+1} & 
\big(h_{\la_i-i-j}\big)_{\substack{1\leq i\leq n+1\\ 1\leq j\leq n-1}} \\[3mm]
\big(h_{\la_{i+1}-i-j+2}\big)_{\substack{1\leq i\leq n-1\\1\leq j\leq n+1}} &
\big(h_{\la_{i+1}-i+j+1}\big)_{1\leq i,j\leq n-1}
\end{pmatrix}.
\end{align*}
Push the first column so it becomes the $(n+1)$-st, and then push
the $(n+1)$-st row to the final row, which picks up a minus sign. 
The resulting determinant differs from that of 
$\rs_{(\la_1,\dots,\la_n),(\la_2,\dots,\la_{n+1})}$ 
in only the last row, so we can take the sum of the two, giving
\begin{align*}
\det_{1\leq i,j\leq 2n}
\begin{pmatrix}
\big(h_{\la_i-i+j}\big)_{1\leq i,j\leq n} & 
\big(h_{\la_i-i-j+1}\big)_{\substack{1\leq i\leq n}} \\[2mm]
\big(h_{\la_{i+1}-i-j+1}\big)_{\substack{1\leq i\leq n-1\\1\leq j\leq n}} &
\big(h_{\la_{i+1}-i+j}\big)_{\substack{1\leq i\leq n-1\\1\leq j\leq n}}\\[3mm]
(h_{\la_{n+1}-n-j+1}-h_{\la_{n+1}-n+j-1})_{1\leq j\leq n} & 
(h_{\la_{n+1}-n+j}-h_{\la_{n+1}-n-j})_{1\leq j\leq n}
\end{pmatrix}.
\end{align*}
In this new determinant, add columns $n+1,\dots,2n-1$ to columns $2,\dots,n$,
and then subtract rows $2,\dots,n$ from rows $n+1,\dots,2n-1$, which gives
\begin{align*}
\frac{1}{2}
\det_{1\leq i,j\leq 2n}
\begin{pmatrix}
\big(h_{\la_i-i+j}+h_{\la_i-i-j+2}\big)_{1\leq i,j\leq n} & 
\big(h_{\la_i-i-j+1}\big)_{\substack{1\leq i\leq n}} \\[2mm]
0 &
\big(h_{\la_{i+1}-i+j}-h_{\la_{i+1}-i-j}\big)_{1\leq i,j\leq n}\\[3mm]
\end{pmatrix},
\end{align*}
which equals
$\sp_{(\la_1,\dots,\la_n)}\o_{(\la_2,\dots,\la_{n+1})}$.
The final factorisation \eqref{Eq_CK4} follows almost identically.
\end{proof}

\subsection*{Acknowledgements}
I would like to thank Christian Krattenthaler for many useful 
comments, corrections and suggestions.
This work also benefitted from discussions with Ilse Fischer, 
Hans H\"ongesberg, Josef K\"ustner, Florian Schreier-Aigner and Ole Warnaar.

\end{document}